\newtheorem{thm}{Theorem}[section]
\newtheorem{cor}[thm]{Corollary}
\newtheorem{de}{Definition}[section]
\newenvironment{proof} {\par \noindent \textbf{Proof. }}{\QED \par \bigskip \par}
\newcommand{\QED}{\hfill$\square$}
\def\cp{\,\square\,}
\begin{document}
\baselineskip 16pt

\phantom{.} \vskip 6cm

\begin{center}
{\LARGE \bf ECCENTRIC CONNECTIVITY INDEX } \\
\bigskip
\bigskip

{\large \sc Aleksandar Ili\'c\ \footnotemark[3] }

\smallskip
{\em Faculty of Sciences and Mathematics, Vi\v{s}egradska 33, 18 000 Ni\v{s}, Serbia} \\
e-mail: {\tt aleksandari@gmail.com}

\bigskip\medskip
{\small (Received \today)}
\bigskip

\end{center}


\begin{abstract}
The eccentric connectivity index $\xi^c$ is a novel distance--based molecular structure descriptor
that was recently used for mathematical modeling of biological activities of diverse nature. It is
defined as $\xi^c (G) = \sum_{v \in V (G)} deg (v) \cdot \varepsilon (v)$\,, where $deg (v)$ and
$\varepsilon (v)$ denote the vertex degree and eccentricity of $v$\,, respectively. We survey some
mathematical properties of this index and furthermore support the use of eccentric connectivity
index as topological structure descriptor. We present the extremal trees and unicyclic graphs with
maximum and minimum eccentric connectivity index subject to the certain graph constraints. Sharp
lower and asymptotic upper bound for all graphs are given and various connections with other
important graph invariants are established. In addition, we present explicit formulae for the
values of eccentric connectivity index for several families of composite graphs and designed a
linear algorithm for calculating the eccentric connectivity index of trees. Some open problems and
related indices for further study are also listed.
\end{abstract}

\section{Introduction}

Let $G$ be a simple connected graph with $n = |V|$ vertices. For a vertex $v \in V (G)$\,, $deg
(v)$ denotes the degree of $v$\,. For vertices $v, u \in V$\,, the distance $d (v, u)$ is defined
as the length of the shortest path between $v$ and $u$ in $G$\,. The eccentricity $\varepsilon (v)$
of a vertex $v$ is the maximum distance from $v$ to any other vertex. The diameter $d (G)$ of $G$
is defined as the maximum value of the eccentricities of the vertices of $G$\,. Similarly, the
radius of $G$ is defined as the minimum value of the eccentricities of the vertices of $G$\,.

Sharma, Goswami and Madan \cite{ShGoMa97} introduced a distance--based molecular structure
descriptor, which they named ''{\it eccentric connectivity index\/}'' and which is defined as
$$
\xi^c = \xi^c (G) = \sum_{v \in V (G)} deg (v) \cdot \varepsilon (v) \ .
$$

The index $\xi^c$ was successfully used for mathematical models of biological activities of diverse
nature \cite{DuGuMa08,GuSiMa02,KuSaMa04,SaMa00}. The prediction of physico-chemical,
pharmacological and toxicological properties of compounds directly from their molecular structure
has become an important tool in pharmaceutical drug design. The parameters derived from a
graph-theoretic model of a chemical structure are being used not only in QSAR studies, but also in
the environmental hazard assessment of chemicals. Eccentric connectivity index has been shown to
give a high degree of predictability of pharmaceutical properties, and provide leads for the
development of safe and potent anti-HIV compounds
\cite{DuMa06,GuSiMa02a,KuMa06,KuMa07,LaMa05,SaMa03a,SaMa02}. Eccentric connectivity index is also
proposed as a measure of branching in alkanes \cite{IlGu10}.

The investigation of its mathematical properties started only recently, and has so far resulted in
determining the extremal values of the invariant and the extremal graphs where those values are
achieved \cite{ZhDu09} and \cite{IlGu10}, and also in a number of explicit formulae for the
eccentric connectivity index of several classes of graphs \cite{DoSa09}. An asymptotically sharp
upper bound is derived independently in \cite{DoSaVu10} and \cite{MoMuSw09}, and it is shown that
the eccentric connectivity index grows no faster than a cubic polynomial in the number of vertices.
In \cite{Il10} the present author analyzed the eccentric connectivity index of unicyclic graphs
with given girth and designed a linear algorithm for calculating $\xi^c$ of unicyclic graphs. In
\cite{AsDoSa10} and \cite{AsSaGh10} Ashrafi et al. computed the eccentric connectivity index of
nanotubes and nanotori. In \cite{DoSa09a} the authors presented explicit formulae for the values of
eccentric connectivity index and total eccentricity for several families of benzenoid graphs
(triangular benzenoids, benzenoid rhombi, parallelograms and hexagonal benzenoids), while in
\cite{DoGrOr10} are given formulae of zigzag and armchair hexagonal belts and the corresponding
open chains.

The paper is organized as follows. In Section 2 we present some preliminary results and explicit
formulae for several important classes of graphs. In Section 3, we introduce two graph
transformations, such that $\xi^c$ is monotone under these transformations. In Section 4, some
upper and lower bounds involving other important graph invariants are established. In addition, we
prove that the star $S_n$ has minimal value of eccentric connectivity index, while lolipop $LP_{n,
\lfloor n / 3 \rfloor}$ has asymptotically the largest value of eccentric connectivity index among
connected graphs. In Section 5, we present the extremal trees with maximum and minimum eccentric
connectivity index subject to certain constraints (diameter, radius, the number of pendent
vertices, matching number, independence number, maximum vertex degree). In Section 6 we
characterize the extremal unicyclic graphs with $n$ vertices and given girth $k$\,. In Section 7,
the formulae for eccentric connectivity index of Cartesian product of graphs are given, with some
chemical examples including nanotorus and nanotube. In Section 8 we designed a linear algorithm for
calculating $\xi^c$ index of trees. Some open problems and modifications of the eccentric
connectivity index are presented in Section 9.

\section{Preliminaries}

First we give two useful estimations of the eccentric connectivity index and in the following we
present explicit formulae for eccentric connectivity index of various families of graphs.

For every vertex $v \in G$\,, there holds $r (G) \leq \varepsilon (v) \leq d (G)$\,. Using $\sum_{v
\in V (G)} deg (v) = 2m$\,, it follows
\begin{equation}
2m \cdot r (G) \leq \xi^c (G) \leq 2m \cdot d(G)\,,
\end{equation}
with equality if and only if all vertices have the same eccentricity, i.e. $r (G) = d (G)$\,.

Let $k$ be the number of vertices of degree $n - 1$ in the graph $G \not \cong K_n$\,, with $r (G)
= 1$\,. For $1 \leq k \leq n - 1$ we have
\begin{equation}
\label{eq:k} \xi(G) = \sum_{v \in V (G)} deg (v) \cdot \varepsilon (v) \geq (n - 1)k + 2 (2m - (n -
1)k) = 4m - k (n - 1)\,,
\end{equation}
with equality if and only if all $n - k$ vertices of degree less than $n - 1$ have eccentricity
two.

By direct calculation, the following formulae hold
$$
\xi^c (K_n) = n (n - 1) \qquad \xi^c (K_{n,m}) = 4mn
$$
$$
\xi^c (P_n) = \left \lfloor \frac{3 (n - 1)^2 + 1}{2} \right \rfloor \qquad \xi^c (S_n) = 3 (n - 1)
$$
$$
\xi^c (C_n) = 2n \left \lfloor \frac{n}{2} \right \rfloor \qquad \xi^c (Q_n) = n^2 \cdot 2^n
$$
for the complete graph $K_n$\,, complete bipartite graph $K_{n,m}$\,, path $P_n$\,, star $S_n$\,,
cycle $C_n$ and hypercube~$Q_n$\,.

It is sometimes interesting to consider the sum of eccentricities of all vertices of a given
graph~$G$~\cite{DaGoSw04}. We call this quantity the total eccentricity of the graph $G$ and denote
it by $\zeta (G)$\,. For a $k$-regular graph $G$\,, we have $\xi^c (G) = k \cdot \zeta (G)$\,.

\section{Graph transformations}

\begin{thm}
\label{thm-pi} Let $w$ be a vertex of a nontrivial connected graph $G$\,. For nonnegative integers
$p$ and $q$\,, let $G (p, q)$ denote the graph obtained from $G$ by attaching to vertex $w$ pendent
paths $P = w v_1 v_2 \ldots v_p$ and $Q = w u_1 u_2 \dots u_q$ of lengths $p$ and~$q$\,,
respectively. If $p \geq q \geq 1$\,, then
$$
\label{eq-pi} \xi^c (G (p, q)) < \xi^c (G (p + 1, q - 1)) \ .
$$
\end{thm}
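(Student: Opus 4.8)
The plan is to prove positivity of the difference $\Delta = \xi^c(G(p+1,q-1)) - \xi^c(G(p,q))$ by a vertex-by-vertex comparison. The starting observation is that both graphs arise from $G$ by attaching a single pendent path of length $p+q$ at $w$; passing from $G(p,q)$ to $G(p+1,q-1)$ merely slides the branch $G$ one step along this path toward its nearer end, which is legitimate precisely because $p \geq q$. Since the vertex and edge counts are preserved, I expect only a handful of local quantities to change. I would set $D = \varepsilon_G(w)$ and, for a vertex $x$ of $G$, write $a(x) = d_G(x,w)$ and let $\varepsilon_G(x)$ be the eccentricity of $x$ inside $G$.

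First I would record the two eccentricity formulas. For a vertex $x$ of $G$ the farthest vertex lies either inside $G$ or at the tip of the longer arm, so $\varepsilon(x) = \max\{\varepsilon_G(x),\, a(x)+p\}$ in $G(p,q)$ and $\varepsilon(x) = \max\{\varepsilon_G(x),\, a(x)+p+1\}$ in $G(p+1,q-1)$, where $p \geq q$ is used to discard the shorter arm. Hence every vertex of $G$ has non-decreasing eccentricity, and $w$ itself gains exactly $1$ whenever $D \leq p$. For a vertex $z_i$ of the combined path, indexed by its distance $i$ from one end with $w$ sitting at index $p$ respectively $p+1$, the eccentricity equals $\max\{i,\, p+q-i,\, |\,i-(\text{index of }w)\,|+D\}$; only the last term changes under the transformation, rising by $1$ on the long side of $w$ and falling by $1$ on the short side.

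The heart of the argument is to sum these changes and show the total is strictly positive, for which I would split on $D$. If $D \leq p$, then no short-side eccentricity can actually decrease, since a decrease would force $D > p$; thus the path contribution to $\Delta$ is non-negative, while the contribution of $w$ to the core sum is already $\deg_G(w)\cdot 1 \geq 1$ (here $G$ nontrivial connected gives $\deg_G(w)\geq 1$), so $\Delta \geq 1$. If instead $D > p$, then on both arms the eccentricity is governed by the branch $G$, so every long-side vertex gains $1$ and every short-side vertex loses $1$; counting the degree-weighted terms, with weight $2$ for interior path vertices and $1$ for the two tips, yields a net path contribution of $(1+2p)-(2q-1) = 2(p-q+1) > 0$, again with a non-negative core contribution.

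The main obstacle I anticipate is the careful bookkeeping of the path-vertex eccentricities, and in particular establishing the clean dichotomy that a short-side decrease occurs if and only if $D > p$, and that in this regime the mirror vertex $z_{p-t}$ on the long side increases for every $t$. This correspondence $z_{p-t} \leftrightarrow z_{p+1+t}$, together with the surplus of unpaired long-side vertices guaranteed by $p \geq q$, is exactly what converts the delicate signed per-vertex changes into a manifestly positive sum. The degree change, namely the old tip of the long arm rising from degree $1$ to $2$ and the neighbor of the relocated tip dropping from $2$ to $1$ (or from $\deg_G(w)+2$ to $\deg_G(w)+1$ when $q=1$), is automatically subsumed once $G$ is treated as a single unit attached at the sliding vertex, so no separate degree argument is needed beyond checking that the endpoint case $q=1$ leaves the formulas unchanged.
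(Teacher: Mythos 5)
Your proposal is correct, and it takes a genuinely different route from the paper's proof, even though both ultimately do casework on the same parameter $D=\varepsilon_G(w)$ (the length of the longest path from $w$ inside $G$). The paper keeps vertex identities fixed ($u_q$ is detached and re-attached beyond $v_p$) and splits into three cases, $D>p$, $q<D\leq p$ and $D\leq q$; in each case it must track the two vertices whose degrees change ($u_{q-1}$ and $v_p$) as well as the vertices whose eccentricities change, and it bounds the difference by the contributions of those few vertices. Your sliding-core decomposition instead matches the two graphs position-by-position along the fixed path on $p+q+1$ vertices, with $G$ glued at position $p$, respectively $p+1$. This buys three things: the path degree sequence becomes literally invariant, so no separate degree bookkeeping is needed; the paper's Cases 2 and 3 collapse into your single case $D\leq p$, where positivity comes from the term $deg_G(w)\cdot 1\geq 1$ at $w$; and the case $D>p$ yields the explicit surplus $(1+2p)-(2q-1)=2(p-q+1)>0$, which agrees exactly with what the paper's Case 1 computation produces. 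The dichotomies you flag as the main obstacles do go through by direct comparison of the three terms in your max formulas: a short-side decrease at position $i\geq p+1$ requires $i-p+D>i$, i.e. $D>p$, and when $D>p$ the $D$-term strictly dominates at every path position, so each long-side position gains exactly $1$ and each short-side position loses exactly $1$. One wording slip worth fixing: the opening claim that both graphs arise ``by attaching a single pendent path of length $p+q$ at $w$'' is, taken literally, a different graph (that would be $G(p+q,0)$); what you mean, and what your argument actually uses, is that both graphs consist of a path of length $p+q$ with the branch $G$ attached at an internal vertex of it.
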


\begin{proof}
The degrees of vertices $u_{q - 1}$ and $v_p$ are changed, while all other vertices have the same
degree in $G (p + 1, q - 1)$ as in $G (p, q)$\,. Since after this transformation the longer path
has increased, the eccentricity of vertices from $G$ are either the same or increased by one. We
will consider three cases based on the longest path from the vertex $w$ in the graph $G$\,. Denote
with $deg' (v)$ and $\varepsilon' (v)$ the vertex degree and eccentricity of vertex $v$ in $G (p +
1, q - 1)$\,.

\noindent {\bf Case 1. } The length of the longest path from the vertex $w$ in $G$ is greater than
$p$\,. This means that the vertex of $G$\,, most distant from $w$ is the most distant vertex for
all vertices of $P$ and $Q$\,. It follows that $\varepsilon_{G (p + 1, q - 1)} (v) = \varepsilon_{G
(p, q)} (v)$ for all vertices $w, v_1, v_2, \ldots, v_p, u_1, u_2, \ldots, u_{q - 1}$\,, while the
eccentricity of $u_q$ increased by $p + 1 - q$\,.
\begin{eqnarray*}
\xi^c (G (p + 1, q - 1)) - \xi^c (G (p, q)) &\geq& \left[ deg' (u_{q - 1})\,\varepsilon' (u_{q -
1})
+ deg' (u_{q})\,\varepsilon'\,(u_{q}) + deg' (v_{p})\,\varepsilon' (v_{p}) \right] \\
&-& \left[ deg (u_{q - 1})\,\varepsilon (u_{q - 1}) + deg (u_{q})\,\varepsilon (u_{q}) +
deg (v_{p})\,\varepsilon (v_{p}) \right] \\
&=& - \varepsilon (u_{q - 1}) + (p - q + 1) + \varepsilon (v_p) > 0\,.
\end{eqnarray*}

\noindent {\bf Case 2. } The length of the longest path from the vertex $w$ in $G$ is less than or
equal to $p$ and greater than $q$\,. This means that either the vertex of $G$ that is most distant
from $w$ or the vertex $v_p$ is the most distant vertex for all vertices of $P$\,, while for
vertices $w, u_1, u_2, \ldots, u_q$ the most distant vertex is $v_p$\,. It follows that
$\varepsilon_{G (p + 1, q - 1)} (v) = \varepsilon_{G (p, q)} (v)$ for vertices $v_1, v_2, \ldots,
v_p$\,, while $\varepsilon_{G (p + 1, q - 1)} (v) = \varepsilon_{G (p, q)} (v) + 1$ for vertices
$w, u_1, u_2, \ldots, u_{q - 1}$\,. The eccentricity of $u_q$ increased by at least $1$\,.
\begin{eqnarray*}
\xi^c (G (p + 1, q - 1)) - \xi^c (G (p, q)) &\geq& deg' (w)\,\varepsilon' (w) +
deg' (v_{p})\,\varepsilon' (v_{p}) + \sum_{j = 1}^q deg' (u_{j})\,\varepsilon' (u_{j})\\
&-& deg (w)\,\varepsilon (w) - deg (v_{p}) \,\varepsilon (v_{p}) -
\sum_{j = 1}^q deg (u_{j})\,\varepsilon (u_{j})\\
&\geq& q + \left[ \varepsilon (u_{q - 1}) + 1 \right] \left[ deg (u_{q - 1}) - 1 \right] -
\varepsilon (u_{q - 1})\,deg (u_{q - 1}) + \varepsilon (v_p)\\
&>& \varepsilon (v_p) - \varepsilon (u_{q - 1}) > 0 \ .
\end{eqnarray*}

\noindent {\bf Case 3. } The length of the longest path from the vertex $w$ in $G$ is less than or
equal to $q$\,. This means that the pendent vertex most distant from the vertices of $P$ and $Q$ is
either $v_p$ or $u_q$\,, depending on the position. Using the formula for eccentric connectivity
index of a path, we have
\begin{eqnarray*}
\xi^c (G (p + 1, q - 1)) - \xi^c (G (p, q)) &>& \xi^c (P_{p + q + 1}) +
[deg (w) - 2]\,\varepsilon' (w) \\
&-& \xi^c (P_{p + q + 1}) - [deg (w) - 2]\,\varepsilon (w) \\
&=& deg (w) - 2 \geq 0 \ .
\end{eqnarray*}
Since $G$ is a nontrivial graph with at least one vertex, we have strict inequality.

This completes the proof.
\end{proof}

\begin{de}
Let $v$ be a vertex of a tree $T$ of degree $m + 1$\,. Suppose that $P_1, P_2, \ldots, P_m$ are
pendent paths incident with $v$\,, with lengths $1 \leq n_1 \leq n_2 \leq \ldots \leq n_m$\,. Let
$w$ be the neighbor of $v$ distinct from the starting vertices of paths $v_1, v_2, \ldots, v_m$\,,
respectively. We form a tree $T' = \delta (T, v)$ by removing the edges $v v_1, v v_2, \ldots, v
v_{m - 1}$ from $T$ and adding $m - 1$ new edges $w v_1, w v_2, \ldots, w v_{m - 1}$ incident with
$w$\,. We say that $T'$ is a $\delta$-transform of $T$ and write $T' = \delta (T, v)$\,.
\end{de}

\begin{thm}
\label{thm-delta} Let $T' = \delta (T, v)$ be a $\delta$-transform of a tree $T$ of order $n$\,.
Let $v$ be a non-central vertex, that is furthest from the root among all branching vertices (with
degree greater than~$2$). Then
$$
\xi^c (T) > \xi^c (T')\,.
$$
\end{thm}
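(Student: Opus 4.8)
The plan is to compute the difference $\xi^c(T)-\xi^c(T')$ vertex by vertex. Write $w$ for the neighbor of $v$ on the root side, and let $B$ denote the largest distance from $w$ to a vertex lying off $v$'s side, i.e. not in the subtree consisting of $v$ together with its pendent paths $P_1,\dots,P_m$. Since $P_m$ is the longest pendent path at $v$, the eccentricities in $T$ are $\varepsilon_T(v)=\max(n_m,\,B+1)$ and $\varepsilon_T(w)=\max(n_m+1,\,B)$, and for a vertex of a pendent path at distance $j$ from $v$ the farthest vertex is reached either through $v$ to the root side (distance $j+B+1$) or along another path. First I would establish the structural fact that, because $v$ is non-central and is the branching vertex farthest from the (central) root, the center lies strictly on the $w$-side; equivalently $B>n_m$. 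Then $\varepsilon_T(v)=B+1$ and $\varepsilon_T(w)=B$, so $\varepsilon_T(v)=\varepsilon_T(w)+1$, and every eccentricity-realizing farthest vertex --- from $v$, from $w$, and from each path vertex --- lies on the root side.

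Next I would record that the $\delta$-transform changes only two degrees: $deg(v)$ drops from $m+1$ to $2$ and $deg(w)$ rises by $m-1$, while every path vertex keeps its degree. Reattaching $P_1,\dots,P_{m-1}$ at $w$ shifts them one step toward the root, so that distances from their vertices to the root side decrease by one while distances from them to $v$ and to $P_m$ increase by one; all remaining distances are unchanged. Using $B>n_m$ one checks directly that $\varepsilon_{T'}(v)=B+1=\varepsilon_T(v)$ and $\varepsilon_{T'}(w)=B=\varepsilon_T(w)$ (the root side still dominates after the shift), whence
\[
\bigl[deg(v)\,\varepsilon_T(v)+deg(w)\,\varepsilon_T(w)\bigr]-\bigl[2\,\varepsilon_{T'}(v)+(deg(w)+m-1)\,\varepsilon_{T'}(w)\bigr]=(m-1)(B+1)-(m-1)B=m-1>0 .
\]

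It then remains to account for the path vertices and the root-side vertices. For a vertex $z$ of $P_m$ at distance $j$ from $v$, both $\varepsilon_T(z)$ and $\varepsilon_{T'}(z)$ equal $j+B+1$, since $B>n_m\ge n_{m-1}$ keeps the root side farther than the (now one longer) detached paths; so these contribute zero. For a vertex $y$ of a moved path the root side is again dominant, and its eccentricity drops by exactly one, giving a contribution $deg(y)\cdot 1>0$. For a root-side vertex $x$ only its distances to the moved paths can change, and they only decrease, so $\varepsilon_{T'}(x)\le\varepsilon_T(x)$ and the contribution is nonnegative. Summing all contributions gives $\xi^c(T)-\xi^c(T')\ge (m-1)+\sum_{y}deg(y)>0$, which proves the claim.

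I expect the main obstacle to be the structural step $B>n_m$: one must argue carefully, from $v$ being non-central and the farthest branching vertex from the root, that the eccentricity of $v$ is realized on the root side and strictly exceeds $\varepsilon_T(w)$. The degenerate balanced case $B=n_m$ would place the center on the edge $vw$, making $v$ central and hence excluded by hypothesis; ruling this out, together with the case $B<n_m$ which would force the root onto the pendent side and contradict $v$ being the farthest branching vertex, is precisely where the two hypotheses on $v$ are used. Once this is settled, the remaining eccentricity bookkeeping above is routine.
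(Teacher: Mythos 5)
Your proof is correct and follows essentially the same route as the paper's: the same vertex-by-vertex bookkeeping (degrees change only at $v$ and $w$, by $m-1$; eccentricities of vertices on the moved paths $P_1,\dots,P_{m-1}$ drop by exactly one; $P_m$ and the root side are unaffected; $\varepsilon (v) = \varepsilon (w) + 1$), summing to the same positive quantity $2(n_1 + \cdots + n_{m-1})$. The only difference is that you explicitly establish the structural fact $B > n_m$ (equivalently, that the eccentricities of $v$, $w$ and all path vertices are realized on the root side), which the paper simply asserts as a consequence of $v$ being non-central and furthest from the root among branching vertices.
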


\begin{proof}
The degrees of vertices $v$ and $w$ have changed -- namely, $deg (v) - deg' (v) = deg' (w) - deg
(w) = m - 1$\,. Since the furthest vertex from $v$ does not belong to $P_1, P_2, \ldots, P_m$ and
$n_m \geq n_i$ for $i = 1, 2, \ldots, m - 1$\,, it follows that the eccentricities of all vertices
different from $P_1, P_2, \ldots, P_{m - 1}, P_m$ do not change after $\delta$ transformation. The
eccentricities of vertices from $P_m$ also remain the same, while the eccentricities of vertices
from $P_1, P_2, \ldots, P_{m - 1}$ decrease by one. Using the equality $\varepsilon (v) =
\varepsilon (w) + 1$\,, it follows that
\begin{eqnarray*}
\xi^c (T) - \xi^c (T') &=& \sum_{i = 1}^{m - 1} (1 + 2(n_i - 1)) + (m - 1) \cdot \varepsilon (v) - (m - 1) \cdot \varepsilon (w) \\
&=& 2 \left ( n_1 + n_2 + \ldots + n_{m - 1} \right) - (m - 1) + (m - 1)(\varepsilon (v) - \varepsilon (w)) \\
&=& 2 \left ( n_1 + n_2 + \ldots + n_{m - 1} \right) > 0\,.
\end{eqnarray*}
This completes the proof.
\end{proof}

By above transformations, we get the following

\begin{cor}
\label{cor:pisigma} Let $G$ be a connected graph and $u\in V(G)$\,. Assume that $G_{1}$ is the
graph obtained from $G$ by attaching a tree $T$ ($T \not \cong P_{k}$ and $T \not \cong S_{k}$) of
order $k$ at $u$\,; $G_{2}$ is the graph obtained from $G$ by identifying $u$ with an endvertex of
a path $P_{k}$\,; $G_{3}$ is the graph obtained from $G$ by identifying $u$ with the center of a
star $S_{k}$\,. Then
$$\xi(G_{2})<\xi(G_{1})< \xi(G_{3})\,.$$
\end{cor}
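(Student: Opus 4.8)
The plan is to prove the two inequalities in the corollary separately, in each case by reducing the extremal configurations to the endpoints of a chain of graph transformations already established in Theorem~\ref{thm-pi} and Theorem~\ref{thm-delta}. The key observation is that among all trees of order $k$ attached at a fixed vertex $u$, the path $P_k$ and the star $S_k$ are the two extremes: every other tree $T$ lies strictly between them in the $\xi^c$-ordering, and both transformations are monotone, so repeated application drives any intermediate tree toward one of these two boundary cases.

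First I would establish the left inequality $\xi^c(G_2) < \xi^c(G_1)$. Here $G_2$ has the attached tree equal to a path, which is the ``most stretched'' configuration. I would argue that starting from $G_1$ (with $T \not\cong P_k$), one can repeatedly apply the path-lengthening transformation of Theorem~\ref{thm-pi}: whenever the attached tree has a branching vertex with two pendent paths of lengths $p \geq q \geq 1$, replacing them by paths of lengths $p+1$ and $q-1$ strictly increases $\xi^c$. Since $T \not\cong P_k$, at least one such branching vertex exists, so this transformation applies at least once; iterating it (each step strictly decreasing the number of branching vertices or the imbalance) terminates at the single path $P_k$, which is exactly the attachment giving $G_2$. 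Because $G_1$ is strictly larger than the previous graph in the chain and the chain ends at a graph with the same $\xi^c$ value as $G_2$ (a path attached at $u$ is $G_2$), one obtains $\xi^c(G_1) > \xi^c(G_2)$.

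Next I would establish the right inequality $\xi^c(G_1) < \xi^c(G_3)$ using the $\delta$-transform of Theorem~\ref{thm-delta}, which strictly decreases $\xi^c$ and ``pulls in'' pendent paths toward the root. Starting from $G_3$ (a star attached at $u$, the ``most compact'' configuration) and running the transformations in reverse — or equivalently starting from $G_1$ and applying $\delta$-transforms at the furthest branching vertex repeatedly until the attached tree collapses to a star centered at $u$ — each $\delta$-step strictly decreases $\xi^c$, and since $T \not\cong S_k$ at least one branching structure survives that the star does not have, guaranteeing that $G_1$ is strictly below $G_3$. The requirement that $v$ be the furthest branching vertex from the root in Theorem~\ref{thm-delta} is what ensures the hypotheses are met at each stage, and I would need to confirm that after attaching $T$ to $G$ the relevant furthest-branching-vertex condition still holds within the tree part.

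The main obstacle, I expect, will be verifying that the transformations actually apply to the \emph{attached} tree within the larger graph $G_i$, rather than to a freestanding tree. Theorem~\ref{thm-pi} is stated for a pendent path configuration at an arbitrary vertex $w$ of any connected graph, so the path-lengthening step transfers directly; but Theorem~\ref{thm-delta} is phrased for a tree $T$ with a root and branching vertices, so I would need to argue carefully that the eccentricity bookkeeping still goes through when the ``root side'' is replaced by the whole graph $G$ hanging off $u$. Concretely, the eccentricities of the pulled-in pendent-path vertices still decrease and the degree shifts at $v$ and $w$ are unchanged by the presence of $G$, so the sign of $\xi^c(T) - \xi^c(T')$ is preserved; making this localization rigorous, and confirming that each transformation strictly changes the graph (so that a strict inequality accumulates rather than a weak one), is the delicate part of the argument.
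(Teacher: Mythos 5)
You follow the same route as the paper, which obtains this corollary in one line by iterating Theorems \ref{thm-pi} and \ref{thm-delta}; the problem is that in both halves of your argument the stated conclusion is the opposite of what your own chain establishes. For the first inequality you start at $G_1$ and repeatedly apply Theorem \ref{thm-pi}, each step \emph{strictly increasing} $\xi^c$, until the attached tree is straightened into a path, i.e.\ until the chain terminates at $G_2$; a strictly increasing chain from $G_1$ to $G_2$ proves $\xi^c(G_1) < \xi^c(G_2)$, yet you conclude $\xi^c(G_1) > \xi^c(G_2)$. Symmetrically, for the second inequality you start at $G_1$ and apply $\delta$-transforms, each step \emph{strictly decreasing} $\xi^c$, until the attached tree collapses to a star, i.e.\ until the chain terminates at $G_3$; this proves $\xi^c(G_1) > \xi^c(G_3)$, yet you conclude that $G_1$ lies strictly below $G_3$. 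A monotone increasing chain cannot end below its starting point, so as written the proof is logically invalid.

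What your transformations actually prove is $\xi^c(G_3) < \xi^c(G_1) < \xi^c(G_2)$, and that is the correct statement: the corollary as printed has its inequalities reversed (equivalently, the roles of $G_2$ and $G_3$ interchanged). This is forced by the paper itself: among trees, $\xi^c(P_n) \geq \xi^c(T) \geq \xi^c(S_n)$, and the corollary is later invoked to show that \emph{minimum}-$\xi^c$ trees of given diameter carry stars on the diametral path, while \emph{maximum}-$\xi^c$ unicyclic graphs of given girth carry pendent paths on the cycle. Concretely, for $G = K_2$ and $k = 5$ one has $G_2 \cong P_6$ and $G_3 \cong S_6$, so $\xi^c(G_2) = 38 > 15 = \xi^c(G_3)$, contradicting the printed claim $\xi^c(G_2) < \xi^c(G_3)$. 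So the right course is to prove the reversed chain, which your argument essentially accomplishes, up to the issue you correctly flag: Theorem \ref{thm-delta} is stated for trees, and when the pendent paths at the branching vertex $v$ are long relative to $G$, the hypothesis that the vertex furthest from $v$ lies outside those paths can fail; one must first shorten them by balancing (the reverse of Theorem \ref{thm-pi}, which also strictly decreases $\xi^c$) or handle this case separately. The paper's one-line derivation glosses over the same point, so supplying that detail would be a genuine improvement rather than a deviation.
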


\section{Upper and lower bounds}

For positive integers $n$ and $m$\,, with $n - 1 \leq m < \frac{n(n-1)}{2}$\,, let
$$
a = a_{n,m} = \left \lfloor \frac{2n-1-\sqrt{(2n-1)^2-8m}}{2} \right \rfloor\,.
$$
Note that $a$ is the largest integer satisfying $f (a) \geq 0$ for the quadratic function $f (a) =
a^2 - 2na + a + 2m$\,. Let $G_{n, m}$ be the set of graphs $K_a \vee H$\,, where $H$ is a graph
with $n - a$ vertices and $m - \frac{a(a-1)}{2}- a (n - a)$ edges.

\begin{thm}
\label{thm:lower} Let $G$ be a graph with $n$ vertices and $m$ edges, $n - 1 \leq m <
\frac{n(n-1)}{2}$\,. Then
$$
\xi(G) \geq 4m - a (n - 1)\,,
$$
with equality if and only if $G \in G_{n, m}$\,.
\end{thm}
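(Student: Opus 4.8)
The plan is to reduce the theorem to the estimate in equation (\ref{eq:k}), after first reinterpreting the quantity $a$ combinatorially. I would observe that $f(k) = k^2 - (2n-1)k + 2m \geq 0$ is exactly the feasibility condition for a graph with $n$ vertices and $m$ edges to possess $k$ vertices of degree $n-1$: if $G$ has $k$ universal vertices, they span a $K_k$ and join to the remaining $n-k$ vertices, forcing $m \geq \binom{k}{2} + k(n-k) = \frac{k(2n-k-1)}{2}$, which rearranges precisely to $f(k) \geq 0$. Since $\frac{k(2n-k-1)}{2}$ is increasing for $k \leq n-1$, and the larger root $r_2 = \frac{2n-1+\sqrt{(2n-1)^2-8m}}{2}$ exceeds $n-1$, the largest admissible number of universal vertices in the meaningful range is exactly $a = \lfloor r_1 \rfloor$, where $r_1$ is the smaller root. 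Hence every graph with the prescribed $n$ and $m$ has at most $a$ vertices of degree $n-1$.

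Next I would split on the radius. If $r(G) = 1$, let $k \geq 1$ be the number of universal vertices; equation (\ref{eq:k}) gives $\xi^c(G) \geq 4m - k(n-1)$, and since $k \leq a$ and $n-1 > 0$, this yields $\xi^c(G) \geq 4m - a(n-1)$. If instead $r(G) \geq 2$, every vertex has eccentricity at least $2$, so $\xi^c(G) = \sum_v deg(v)\,\varepsilon(v) \geq 2\sum_v deg(v) = 4m > 4m - a(n-1)$, using $a \geq 1$, which follows from $f(1) = 2\bigl(m - (n-1)\bigr) \geq 0$. This establishes the bound in all cases and already shows that equality forces $r(G) = 1$.

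For the equality characterization I would argue that equality requires $k = a$, since $k < a$ would give the strict inequality $4m - k(n-1) > 4m - a(n-1)$, together with equality in (\ref{eq:k}), that is, all $n - a$ non-universal vertices having eccentricity exactly $2$. A graph with exactly $a$ universal vertices is $K_a \vee H$ for some $H$ on $n-a$ vertices with $m - \frac{a(a-1)}{2} - a(n-a)$ edges; conversely, each vertex of such an $H$ lies at distance at most $2$ from every other vertex through $K_a$, and has eccentricity exactly $2$ precisely when it is not universal in $G$. The one subtlety is ruling out that $H$ contributes extra universal vertices: a vertex of $H$ adjacent to all others would be universal in $G$, producing $a+1$ universal vertices.

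The hard part will be pinning down this last point rigorously. I would use $m < \frac{n(n-1)}{2}$ to show the discriminant satisfies $(2n-1)^2 - 8m > 1$, hence $r_2 - r_1 > 1$ and $a + 1 \leq r_1 + 1 < r_2$, placing $a+1$ strictly between the two roots so that $f(a+1) < 0$. This makes $a+1$ universal vertices infeasible, so every member of $G_{n,m}$ has exactly $a$ universal vertices and all remaining vertices at eccentricity $2$; therefore equality holds for precisely the graphs in $G_{n,m}$, completing the proof.
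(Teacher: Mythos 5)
Your proposal is correct and takes essentially the same route as the paper: split on the radius, apply the preliminary bound \eqref{eq:k} when $r(G)=1$, and show that the number $k$ of universal vertices satisfies $k \le a$ via the feasibility condition $f(k)\ge 0$. You are in fact more careful than the paper on two points it treats as immediate --- why $f(k)\ge 0$ forces $k\le a$ (you rule out the branch beyond the larger root using $r_2 > n-1$), and why every graph in $G_{n,m}$ actually attains equality (your $f(a+1)<0$ argument, which shows $H$ cannot contribute an extra universal vertex).
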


\begin{proof}
It is obvious that $a \geq 1$\,. If $r (G) \geq 2$\,, then
\begin{equation}
\label{eq:lower} \xi(G) = \sum_{v \in V (G)} deg (v) \cdot \varepsilon (v) \geq r (G) \cdot \sum_{v
\in V (G)} deg (v) \geq 2 \cdot 2m > 4m - a (n - 1)\,.
\end{equation}
For $r (G) = 1$\,, let $k$ be the number of vertices of degree $n - 1$\,. It follows that the
remaining $n - k$ vertices have degree at least $k$\,. The inequality $2m \geq (n - 1)k + k (n -
k)$ implies $k \leq a$\,. Therefore, $\xi(G) \geq 4m - a (n - 1)$\,, with equality if and only if
$G$ has exactly $a$ vertices of degree $n - 1$ and all other vertices have eccentricity two, i.e.
$G \in G_{n, m}$\,.
\end{proof}

For $m = n$ and $m = n + 1$\,, we have $a = 1$\,.

\begin{cor}
Let $G$ be a unicyclic graph on $n \geq 3$ vertices. Then $\xi^c (G) \geq 3n+1$\,, with equality if
and only if $G$ is formed by adding one edge to the star $S_n$\,.
\end{cor}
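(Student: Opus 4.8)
The plan is to obtain this corollary as a direct specialization of Theorem~\ref{thm:lower}. First I would observe that a connected unicyclic graph on $n$ vertices has exactly $m = n$ edges, so for $n \geq 4$ the hypothesis $n - 1 \leq m < \frac{n(n-1)}{2}$ of Theorem~\ref{thm:lower} is satisfied. As the text records immediately before the corollary, $a = a_{n,n} = 1$ in this case. Substituting $m = n$ and $a = 1$ into the bound $\xi^c (G) \geq 4m - a(n-1)$ then yields at once $\xi^c (G) \geq 4n - (n-1) = 3n + 1$, which is the asserted lower bound.

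For the equality case I would simply unwind the characterization $G \in G_{n,m}$ supplied by Theorem~\ref{thm:lower}. With $a = 1$ the family $G_{n,n}$ consists of the graphs $K_1 \vee H$, where $H$ has $n - 1$ vertices and $n - \frac{1 \cdot 0}{2} - 1 \cdot (n-1) = 1$ edge. The join of a single apex vertex with an $(n-1)$-vertex graph containing exactly one edge is precisely the star $S_n$ (apex joined to all remaining vertices) with one further edge inserted between two of its leaves. Hence equality holds if and only if $G$ is obtained from $S_n$ by adding a single edge. As a consistency check one verifies directly that, for $n \geq 4$, this graph has one vertex of degree $n - 1$ and eccentricity $1$, two vertices of degree $2$ and eccentricity $2$, and $n - 3$ pendent vertices of eccentricity $2$, so that $\xi^c = (n-1) + 8 + 2(n-3) = 3n + 1$, matching the bound.

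I do not expect any real obstacle: the whole argument is a substitution into the already-proven Theorem~\ref{thm:lower} together with a translation of the abstract set $G_{n,m}$ into the concrete phrase ``$S_n$ plus one edge.'' The only point demanding a moment's care is the boundary case $n = 3$, where the unique unicyclic graph is the triangle $K_3 = K_n$; here $m = \frac{n(n-1)}{2}$ falls outside the strict hypothesis of Theorem~\ref{thm:lower}, so the clean argument above applies for $n \geq 4$ and the triangle would have to be examined on its own.
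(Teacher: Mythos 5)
Your proposal is correct and follows exactly the route the paper intends: the paper offers no separate proof of this corollary, only the preceding remark that $a = 1$ when $m = n$, so the result is precisely the specialization of Theorem~\ref{thm:lower} that you carried out, including the unwinding of $G_{n,n} = \{K_1 \vee H\}$ into ``$S_n$ plus one edge.'' One remark on the boundary case you flagged but left open: if you actually examine $n = 3$, the unique unicyclic graph is $K_3$, for which $\xi^c(K_3) = 3 \cdot 2 \cdot 1 = 6 < 10 = 3n+1$, so the corollary as stated in the paper is in fact false at $n = 3$; your restriction to $n \geq 4$ is not merely cautious but necessary, and the paper's hypothesis $n \geq 3$ is a (minor) error.
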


\begin{cor}
Let $G$ be a bicyclic graph on $n \geq 5$ vertices. Then $\xi^c (G) \geq 3n+5$\,, with equality if
and only if $G$ is formed by adding two edges to the star $S_n$\,.
\end{cor}

Let $K_n - ke$ be the graph obtained by deleting $k$\,, $0 \leq k \leq \lfloor n/2 \rfloor$\,,
independent edges from the complete graph $K_n$ ($K_n - 0e \cong K_n$). The first Zagreb index
\cite{GuDa04} is defined as $M_1 (G) = \sum_{v \in V (G)} deg^2 (v)$\,.

\begin{thm}
Let $G$ be a connected graph on $n \geq 3$ vertices and $m$ edges. Then
$$
\xi(G) \leq 2nm - M_1 (G)\,,
$$
with equality if and only if $G \cong K_n - ke$\,, for $k = 0, 1, \ldots, \lfloor n/2 \rfloor$\,,
or $G \cong P_4$\,.
\end{thm}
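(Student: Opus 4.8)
The plan is to first rewrite the target upper bound in a form amenable to a vertex-by-vertex comparison. Since $\sum_{v \in V(G)} deg(v) = 2m$, we have $2nm = \sum_{v} n \cdot deg(v)$, and therefore $2nm - M_1(G) = \sum_{v \in V(G)} deg(v)\,(n - deg(v))$. Comparing this with $\xi^c(G) = \sum_v deg(v)\,\varepsilon(v)$, it suffices to establish the local inequality $\varepsilon(v) \leq n - deg(v)$ for every vertex $v$, since then the bound follows by summing term by term.

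To prove this local bound I would fix $v$ and take a shortest path from $v$ to a vertex realizing its eccentricity; this path contains exactly $\varepsilon(v) + 1$ vertices, lying at distances $0, 1, \ldots, \varepsilon(v)$ from $v$. Only the vertex at distance $1$ can be a neighbor of $v$, so at least $deg(v) - 1$ of the neighbors of $v$ lie off this geodesic and are distinct from its vertices. Counting gives $n \geq (\varepsilon(v) + 1) + (deg(v) - 1) = \varepsilon(v) + deg(v)$, which is precisely the claim. Summing $deg(v)\,\varepsilon(v) \leq deg(v)(n - deg(v))$ over all $v$ then yields $\xi^c(G) \leq 2nm - M_1(G)$.

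For the equality analysis, since $G$ is connected with $n \geq 3$ we have $deg(v) \geq 1$ for all $v$, so equality in the summed inequality forces $\varepsilon(v) = n - deg(v)$ at every vertex. I would then split according to the diameter $d(G)$. If $d(G) = 1$ then $G \cong K_n = K_n - 0e$. If $d(G) = 2$, every vertex has eccentricity $1$ or $2$; the eccentricity-$1$ vertices have degree $n-1$, while the equality condition forces each eccentricity-$2$ vertex to have degree exactly $n - 2$, that is, to be non-adjacent to precisely one other vertex. A short argument shows that this non-adjacency relation is symmetric and pairs off the eccentricity-$2$ vertices, so the missing edges form a matching and $G \cong K_n - ke$.

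The main obstacle is the case $d(G) \geq 3$, where I expect the characterization to collapse to $G \cong P_4$. Here I would take a diametral geodesic $x_0 x_1 \cdots x_d$ with $d = d(G)$. Equality at the endpoint $x_0$ forces all $n$ vertices to be accounted for by this path together with the $deg(x_0) - 1$ off-path neighbors of $x_0$; each such off-path neighbor is adjacent to $x_0$ and hence, since $d \geq 3$, cannot be adjacent to $x_d$. Applying equality at $x_d$, which also has eccentricity $d$, then forces $deg(x_d) = 1$, so there are no off-path vertices and $n = d + 1$. Excluding chords, which would shorten the $x_0$–$x_d$ distance below $d$, identifies $G$ with the path $P_{d+1}$, and imposing $\varepsilon(x_i) = n - deg(x_i)$ at the internal vertices finally forces $d = 3$, leaving $G \cong P_4$. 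The delicate point throughout is the bookkeeping that simultaneously pins down the degrees of the path endpoints and rules out both extra vertices and chords.
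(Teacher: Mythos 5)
Your proof is correct and rests on the same core argument as the paper's: the pointwise bound $\varepsilon(v) \leq n - deg(v)$, summed with weights $deg(v)$, is exactly how the paper obtains $\xi^c(G) \leq 2nm - M_1(G)$. Where you genuinely differ is in the equality analysis. The paper records a sharper pointwise equality condition -- either $\varepsilon(v)=1$ and $deg(v)=n-1$, or $\varepsilon(v)\geq 2$ and exactly one vertex lies at each distance $2,\ldots,\varepsilon(v)$ from $v$ -- and then splits on whether some vertex has eccentricity $1$; the condition $d_2(v)=1$ does the work both of pairing off the degree-$(n-2)$ vertices and of capping the diameter at $3$. You instead retain only the identity $\varepsilon(v) = n - deg(v)$ at every vertex and split on the diameter, handling $d(G) \geq 3$ by counting along a diametral geodesic: equality at $x_0$ forces every vertex to lie on the path or be a neighbor of $x_0$, equality at $x_d$ then eliminates off-path vertices (a common neighbor situation would give $d(x_0,x_d)\leq 2$) and chords, so $G \cong P_{d+1}$, and the interior equalities force $d=3$. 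Both routes are sound and of comparable length; the paper's refined condition makes the case analysis terser, while your diametral-path bookkeeping is more self-contained and in fact supplies the details behind the paper's unproved assertion that diameter $3$ plus equality yields $G \cong P_4$. One small shared omission: both you and the paper leave the converse verification (that $K_n - ke$ and $P_4$ indeed attain equality) as an easy check.
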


\begin{proof}
Let $d_i (v)$ be the number of vertices at distance $i$ from the vertex $v$\,. It can be easily
seen that $\varepsilon (v) \leq n - deg (v)$\,. The equality is achieved for $\varepsilon (v) = 1$
and $deg (v) = n - 1$\,, or $\varepsilon (v) \geq 2$ and $d_2 (v) = d_3 (v) = \ldots =
d_{\varepsilon (v)} (v) = 1$\,. Further, we get
$$
\xi^c (G) = \sum_{v \in V} deg (v) \cdot \varepsilon (v) \leq \sum_{i = 1}^n deg (v) \cdot (n - deg
(v)) = 2nm - M_1 (G)\,.
$$

Suppose that equality holds in the above inequality. If $\varepsilon (u) = 1$ for some $u \in V
(G)$\,, then $deg (v) = n - 1$ and $\varepsilon (v) \leq 2$ for all $v \neq u$\,. If $\varepsilon
(v) = 1$ for all $v \in V (G)$\,, then $G \cong K_n$\,. Suppose that $\varepsilon (v) = 2$ for some
$v \in V (G)$\,. Then, there exist a vertex $w \in V (G)$\,, such that $d (v, w) = 2$\,. Since $d_2
(v) = d_2 (w) = 1$\,, the vertex $v$ is unique for fixed $v$\,, and it follows that $deg (v) = deg
(w) = n - 2$\,. This implies that $G \cong K_n - ke$\,, for $k = 1, 2, \ldots \lfloor (n-1)/2
\rfloor$\,.

In the other case, $\varepsilon (u) \geq 2$ for all $u \in V (G)$\,. If $\varepsilon (v) = 2$ for
all $v \in V$\,, then $deg (v) = n - 2$ for all $v \in V (G)$\,, implying that $G \cong K_n -
\frac{n}{2} e$ (with even $n$). If $\varepsilon (v) \geq 3$ for some vertex $v$\,, then the
diameter of $G$ is equal to 3 (otherwise, the center vertex would have at least two neighbors at
distance two), and then it follows that $G \cong P_4$\,.

It can be easily seen that the upper bound for $\xi^c (G)$ is attained for $G \cong K_n - ke$\,, $k
= 0, 1, \ldots, \lfloor n/2 \rfloor$\,, or $G \cong P_4$\,.
\end{proof}

The parameter $D'(G)$ is called the {\it degree distance} of $G$ and it was introduced by Dobrynin
and Kochetova \cite{DoKo94} and Gutman \cite{Gu94} as a weighted version of the Wiener index
\cite{DoEG01}
\begin{equation}
D'(G) =\sum_{{u,v}\in V(G)} (deg (u) + deg (v)) \cdot d(u,v)= \sum_{v \in V} deg (v) \cdot D (v)\,,
\end{equation}
where $D (v)=\sum_{u \in V} d (u, v)$\,. For further details on degree distance index see
\cite{DaGuMuSw09,IlKlSt10}.

\begin{thm}
Let $G$ be a connected graph on $n \geq 2$ vertices. Then
$$
\xi^c (G) \geq \frac{1}{n - 1} D' (G)\,,
$$
with equality if and only if $G \cong K_n$\,.
\end{thm}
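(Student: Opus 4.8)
The plan is to compare $D(v)$ and $\varepsilon(v)$ vertex by vertex and then sum against the degree weights. The key observation is that the transmission $D(v) = \sum_{u \in V} d(u,v)$ is a sum of $n-1$ distances (the term $d(v,v)=0$ contributes nothing), and every such distance satisfies $d(u,v) \leq \varepsilon(v)$ by the very definition of eccentricity. Hence for each vertex $v$ we obtain the pointwise estimate
$$
D(v) = \sum_{u \neq v} d(u,v) \leq (n-1)\,\varepsilon(v)\,.
$$

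First I would multiply this inequality by $deg(v) \geq 0$ and sum over all $v \in V(G)$. Using the definitions of degree distance and eccentric connectivity index this gives
$$
D'(G) = \sum_{v \in V} deg(v) \cdot D(v) \leq (n-1)\sum_{v \in V} deg(v) \cdot \varepsilon(v) = (n-1)\,\xi^c(G)\,,
$$
and dividing by $n-1$ yields the claimed bound $\xi^c(G) \geq \frac{1}{n-1} D'(G)$. For the verification in the $K_n$ case I would simply note $\varepsilon(v)=1$, $deg(v)=n-1$ and $D(v)=n-1$ for every vertex, so both sides equal $n(n-1)$.

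The delicate part is the equality characterization, which is where connectivity must be used. Since $G$ is connected on $n \geq 2$ vertices, every vertex has $deg(v) \geq 1$, so equality in the summed inequality forces $D(v) = (n-1)\varepsilon(v)$ for \emph{every} $v$; in turn this requires $d(u,v) = \varepsilon(v)$ for all $u \neq v$. The main obstacle is then ruling out $\varepsilon(v) \geq 2$: because $v$ has at least one neighbor, there is a vertex at distance $1$ from $v$, and equality forces $\varepsilon(v)=1$. Thus each $v$ is adjacent to all other vertices, and since this holds for all $v$ we conclude $G \cong K_n$. The converse direction is immediate from the computation above, completing the argument.
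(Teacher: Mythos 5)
Your proof is correct and follows essentially the same route as the paper: the pointwise bound $D(v) \leq (n-1)\,\varepsilon(v)$, summed against the degrees, with the equality case analyzed afterwards. In fact your treatment of equality is slightly more careful than the paper's (you explicitly use connectivity and the existence of a neighbor to force $\varepsilon(v)=1$, a step the paper leaves implicit), but the underlying argument is identical.
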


\begin{proof}
Obviously, $\varepsilon (u) \geq \frac{D (u)}{n - 1}$ with equality if and only if $d (u, v)$ is
constant for all vertices $v \neq u$\,. It follows
$$
\xi^c (G) \geq \sum_{u \in V (G)} deg (u) \frac{ D (u) }{n - 1} = \frac{1}{n - 1} D' (G)\,,
$$
with equality if and only if $G$ is a complete graph.
\end{proof}

\subsection{Maximum and minimum values}

\begin{thm}
Let $G$ be $n$-vertex connected graph, with $n \geq 4$\,. Then $\xi^c (G) \geq 3 (n - 1)$\,, with
equality if and only if $G \cong S_n$\,.
\end{thm}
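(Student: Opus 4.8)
The plan is to show that among all $n$-vertex connected graphs, the star $S_n$ uniquely minimizes $\xi^c$, with value $3(n-1)$. The natural strategy is to split on the radius $r(G)$, exactly as in the proof of Theorem~\ref{thm:lower}.

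First I would dispose of the case $r(G) \geq 2$. Here every eccentricity is at least $2$, so using $\sum_{v} deg(v) = 2m$ and $m \geq n - 1$ (connectedness), we get
$$
\xi^c(G) = \sum_{v \in V(G)} deg(v)\,\varepsilon(v) \geq 2 \sum_{v \in V(G)} deg(v) = 4m \geq 4(n-1) > 3(n-1).
$$
So in this case the bound holds strictly and $S_n$ cannot arise (the star has radius $1$).

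Next I would handle the remaining case $r(G) = 1$, i.e.\ some vertex $u$ has degree $n-1$. Let $k \geq 1$ be the number of such dominating vertices. Inequality~(\ref{eq:k}) from the Preliminaries gives $\xi^c(G) \geq 4m - k(n-1)$, and since each of the $k$ dominating vertices has degree $n-1$ the edge count satisfies $2m \geq (n-1)k + k(n-k)$; this is precisely the estimate used in Theorem~\ref{thm:lower}. To make the bound tight against $3(n-1)$ I want both $m$ small and the right-hand side structure forced, so the cleanest route is to argue directly: any graph with $r(G)=1$ that is not a star has either $m \geq n$ or at least two vertices of full degree, and in either subcase plugging into $\xi^c(G) \geq 4m - k(n-1)$ (or observing that extra edges raise low-eccentricity contributions) pushes the value strictly above $3(n-1)$. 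For the star itself one checks directly that $\xi^c(S_n) = 3(n-1)$, matching the formula already recorded in Section~2, since the center has degree $n-1$ and eccentricity $1$ while each of the $n-1$ leaves has degree $1$ and eccentricity $2$.

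The main obstacle is the uniqueness/equality analysis in the $r(G)=1$ case: I must rule out every non-star graph of radius one and confirm none ties the bound. The subtle point is that $S_n$ is the unique tree of radius $1$, so any other radius-one graph contains a cycle and hence $m \geq n$; substituting $m \geq n$ and $k \leq a$ (with $a=1$ when $m \in \{n, n+1\}$, as noted after Theorem~\ref{thm:lower}) into $\xi^c(G) \geq 4m - k(n-1)$ should yield a value exceeding $3(n-1)$ once $n \geq 4$. I would verify the boundary arithmetic carefully, since the strictness of the final inequality depends on the hypothesis $n \geq 4$ and on the exact interplay between $m$ and $k$.
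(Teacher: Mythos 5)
Your radius dichotomy and your treatment of the case $r(G)\geq 2$ coincide with the paper's proof (there phrased as the case $k=0$), and that part is correct as written. The genuine gap is in the case $r(G)=1$: your argument only covers the sparse regime. When $m\in\{n,n+1\}$ (in fact whenever $m\leq 2n-4$) one indeed has $a=1$, hence $k=1$, and $\xi^c(G)\geq 4m-(n-1)\geq 3n+1>3(n-1)$. But a radius-one non-star graph can be dense: as soon as $m\geq 2n-3$ it may have $k\geq 2$ universal vertices (two adjacent universal vertices already force $m\geq 2n-3$), and for graphs such as $K_n$ minus a matching $k$ is close to $n$. In that regime, substituting $m\geq n$ and $k\leq a$ into $\xi^c(G)\geq 4m-k(n-1)$ proves nothing, because the subtracted term $k(n-1)$ grows together with $m$; whether $4m-k(n-1)$ stays above $3(n-1)$ depends on the joint constraint $2m\geq (n-1)k+k(n-k)$, not on $m\geq n$ alone. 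This is exactly the ``interplay between $m$ and $k$'' that you defer to a final verification; it is not boundary arithmetic but the core of the proof, and your proposal never supplies it. (Also, your second alternative ``at least two vertices of full degree'' is subsumed by the first, since two universal vertices create a triangle and hence $m\geq n$.)

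The paper closes this case by substituting the degree count into \eqref{eq:k} to obtain $\xi^c(G)\geq k(3n-2k-1)$ and then minimizing this concave quadratic over $1\leq k\leq n$: the minimum is attained at an endpoint, and $f(1)=3(n-1)<n(n-1)=f(n)$ (this is where $n\geq 4$ enters), with equality forcing $k=1$ and $m=n-1$, i.e.\ $G\cong S_n$. If you prefer to keep your $m\geq n$ route for non-stars, the missing step can be closed without the endpoint discussion by writing, for $1\leq k\leq n-1$,
$$
\xi^c(G)\ \geq\ 4m-k(n-1)\ =\ 2m+\bigl(2m-k(n-1)\bigr)\ \geq\ 2m+k(n-k)\ \geq\ 2n+(n-1)\ =\ 3n-1\ >\ 3(n-1)\,,
$$
using $k(n-k)\geq n-1$ on that range, and checking $K_n$ (the case $k=n$) separately via $\xi^c(K_n)=n(n-1)>3(n-1)$ for $n\geq 4$. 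Some argument of this kind, valid for all $k$ simultaneously, must be added; as it stands your write-up establishes the theorem only for graphs with $m\leq n+1$.
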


\begin{proof}
We will use the same notation as in the proof of Theorem \ref{thm:lower}. If $k = 0$\,, then
according to \eqref{eq:lower} we have
$$
\xi^c (G) \geq 4m > 3 (n - 1)\,.
$$
Now suppose that $k \geq 1$\,. Using inequality $2m \geq (n - 1)k + k (n - k)$\,, we get
$$
\xi^c (G) \geq 2 ((n - 1)k + k (n - k))- k(n-1) = k(3n - 2k - 1)\,.
$$
The quadratic function $f (x) = x (3n - 2x - 1)$ is increasing for $1 \leq x \leq \frac{3n-1}{4}$
and decreasing for $\frac{3n-1}{4} \leq x \leq n$\,. Therefore, the minimum value of $f (x)$ is
attained for $x = 1$ or $x = n$\,. Obviously, $f (1) = 3 (n - 1) < n (n - 1) = f (n)$ and $\xi^c
(G) \geq 3 (n - 1)$ with equality if and only if $k = 1$ and $m = n - 1$\,, i.e. $G \cong S_n$\,.
\end{proof}

The lollipop graph $LP_{n,d}$ is obtained from a complete graph $K_{n - d}$ and a path $P_d$\,, by
joining one of the end vertices of $P_d$ to one vertex of $K_{n-d}$ (see Fig. 1).
$$
\xi^c (LP_{n, d}) = \left\{
\begin{array}{l}
\frac{1}{2} \left (2 - 2d + d^2 +2d^3 -2n +2dn - 4d^2n+2dn^2\right) \quad \qquad \mbox{for even $d$}\,;\\
\frac{1}{2} \left (3 - 2d + d^2 +2d^3 -2n +2dn - 4d^2n+2dn^2\right) \quad \qquad \mbox{for odd
$d$}\,.
\end{array}
\right.
$$

\begin{figure}[h]
  \center
  \includegraphics [width = 8.5cm]{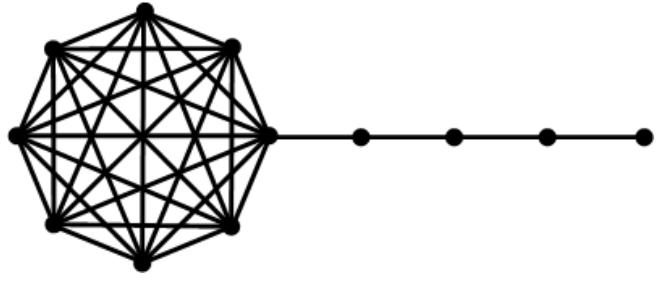}
  \caption { \textit{ The lolipop graph $LP_{12,4}$\,.} }
\end{figure}

The following result is presented in \cite{MoMuSw09} and \cite{DoSaVu10}.

\begin{thm}\label{thm:max}
Let $G$ be a connected graph of order $n$ and diameter $d$\,. Then
$$\xi^c (G) \leq d (n - d)^2 + O (n^2)\,,
$$
and this bound is the best possible.
\end{thm}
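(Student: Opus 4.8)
The plan is to split the sum defining $\xi^c(G)$ according to a fixed diametral path and to exploit the fact that the vertices of a geodesic cannot accumulate many neighbours. First I would fix a shortest path $P = u_0 u_1 \ldots u_d$ realizing the diameter. Since $P$ is a geodesic we have $d(u_i, u_j) = |i - j|$, so $P$ has no chords and the only edges among its vertices are the $d$ path edges. The crucial observation is that every vertex $w \in V(G)$ is adjacent to at most three vertices of $P$: if $w$ were adjacent to both $u_i$ and $u_j$, the length-two walk $u_i w u_j$ would give $d(u_i, u_j) \leq 2$, forcing $|i - j| \leq 2$, so the path-neighbours of $w$ occupy at most three consecutive indices.

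From this I would derive two edge counts. Writing $E_{PN}$ for the number of edges between $V(P)$ and $V(G) \setminus V(P)$, each of the $n - d - 1$ off-path vertices contributes at most three such edges, so $E_{PN} \leq 3(n - d - 1)$. Consequently the path carries only linear total degree,
\[
\sum_{i = 0}^{d} deg(u_i) = 2d + E_{PN} \leq 2d + 3(n - d - 1) = O(n).
\]
Counting instead the edges among the off-path vertices, with $E_{NN} \leq \binom{n - d - 1}{2}$, gives $\sum_{v \notin V(P)} deg(v) = 2 E_{NN} + E_{PN} \leq (n - d - 1)(n - d - 2) + O(n) = (n - d)^2 + O(n)$.

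Now I would split $\xi^c(G) = \sum_{v \in V(P)} deg(v)\,\varepsilon(v) + \sum_{v \notin V(P)} deg(v)\,\varepsilon(v)$ and bound each eccentricity trivially by $d$. The path part is at most $d \sum_i deg(u_i) = O(n^2)$ by the first estimate, while the off-path part is at most $d \cdot \big((n - d)^2 + O(n)\big) = d(n - d)^2 + O(n^2)$, since $d \leq n$. Adding the two contributions yields $\xi^c(G) \leq d(n - d)^2 + O(n^2)$, as claimed. For sharpness I would take the lollipop $LP_{n, d}$; using the explicit formula stated above its leading terms combine as $\tfrac{1}{2}(2dn^2 - 4d^2 n + 2d^3) = d(n - d)^2$ with the remaining terms of order $O(n^2)$, so $\xi^c(LP_{n,d}) = d(n - d)^2 + O(n^2)$ and the bound is attained asymptotically.

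The combinatorial heart of the argument is the ``at most three neighbours on a geodesic'' lemma; everything else is edge-counting bookkeeping. The main obstacle I anticipate is not the lemma itself but keeping every error term genuinely $O(n^2)$ \emph{uniformly} in $d$ over the whole range $1 \leq d \leq n - 1$, so that no stray cubic contribution in $d$ sneaks in outside the designated term $d(n-d)^2$.
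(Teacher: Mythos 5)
Your proposal is correct and follows essentially the same route as the paper: fix a diametral path, observe that no vertex can be adjacent to more than three of its vertices, split $\xi^c$ into the path and off-path contributions, and exhibit a lollipop graph for sharpness. The only differences are bookkeeping --- the paper bounds $\sum_{x\in V(P)} deg(x)$ via a mod-$3$ partition of the path vertices (an equivalent form of your ``at most three neighbours'' lemma) and bounds off-path degrees individually rather than by edge counting, and it invokes $LP_{n,\lfloor n/3\rfloor}$ for optimality whereas you verify $\xi^c(LP_{n,d}) = d(n-d)^2 + O(n^2)$ for each $d$, which is a slightly more complete reading of ``best possible.''
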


\begin{proof}
Let $P = u_0 u_1 \ldots u_d$ be a diametral path, and let $M \subset V$ be the set of the remaining
vertices which are not on $P$\,.

Partition the vertices of $P$ in three sets $V_0, V_1, V_2$\,, such that the set $V_i$ contains the
vertices $u_j$ with $i \equiv j \pmod 3$\,, $i = 0, 1, 2$\,. Let $x$ and $y$ be two vertices from
the set $V_i$\,. Since the distance between $x$ and $y$ is at least three, $x$ and $y$ cannot have
a common neighbor. It follows that the set of neighbors of all vertices from $V_i$ are disjoint,
and consecutively $\sum_{x \in V_i} deg (x) \leq n - |V_i|$\,, for $i = 0, 1, 2$\,.
\begin{eqnarray*}
\sum_{x \in V (P)} deg (x) \cdot \varepsilon (x) &=& \sum_{x \in V_0} deg (x) \cdot \varepsilon (x)
+ \sum_{x \in V_1} deg (x) \cdot \varepsilon (x) + \sum_{x \in V_2} deg (x) \cdot \varepsilon
(x) \\
&\leq& d \left ( \sum_{x \in V_0} deg (x) + \sum_{x \in V_1} deg (x) + \sum_{x \in V_2} deg (x)
\right ) \\
&\leq& d (n - |V_0| + n - |V_1| + n - |V_2|) = d (3n - d + 1)\,.
\end{eqnarray*}

If a vertex $v \in M$ is connected with more than three vertices of $P$\,, then the distance
between the end-vertices of $P$ is less than $d$\,, a contradiction with the fact that $P$ is
diametral. Moreover, if a vertex $v \in M$ is connected with three vertices of $P$\,, then these
vertices must be consecutive. It follows that each vertex in $M$ is connected with at most three
vertices of $P$ and $deg (v) \leq n - d + 1$\,.
$$
\sum_{x \in M} deg (v) \cdot \varepsilon (v) \leq (n - d + 1) \cdot (n - d + 1) \cdot d\,.
$$

Finally, combining these sums we have
\begin{eqnarray*}
\xi^c (G) &=& \sum_{x \in V (P)} deg (x) \cdot \varepsilon (x) + \sum_{x \in M} deg (v) \cdot
\varepsilon (v) \\
&\leq& d (3n - d + 1) + (n - d + 1)^2 d \\
&=& d (n - d)^2 + O (n^2)\,,
\end{eqnarray*}
which completes the proof.
\end{proof}

From arithmetic-geometric mean inequality, we have
$$
d \cdot \frac{n - d}{2} \cdot \frac{n - d}{2} \leq \left (\frac{d + \frac{n-d}{2} + \frac{n
-d}{2}}{3} \right)^3 = \frac{n^3}{27}.
$$

Therefore, the maximum of $d (n - d)^2$ is achieved for $d = \frac{n - d}{2}$\,, i.e. for $d =
\lfloor n / 3 \rfloor$\,. The lollipop graph $LP_{n, \lfloor n / 3 \rfloor}$ shows that the bound
in Theorem \ref{thm:max} is the best possible.

\section{Eccentric connectivity index of trees}

For an arbitrary tree $T$ on $n$ vertices it holds \cite{ZhDu09},
$$
\left \lfloor \frac{3 (n - 1)^2 + 1}{2} \right \rfloor = \xi^c (P_n) \geq \xi^c (T) \geq \xi^c
(S_n) = 3 (n - 1) \ .
$$

The starlike tree $T(n_1,n_2,\ldots,n_k)$ is a tree composed of the root $v$\,, and paths
$P_{n_1}$\,, $P_{n_2}$\,, \ldots, $P_{n_k}$\,, attached at $v$\,. The number of vertices of
$T(n_1,n_2,\ldots,n_k)$ is equal to $n_1 + n_2 + \cdots + n_k + 1$\,. The starlike tree $SB_{n, p}
= T(n_1, n_2, \ldots, n_p)$ is {\em balanced} if all paths have almost equal lengths, i.e., $|n_i -
n_j| \leq 1$ for every $1 \leq i \leq j \leq p$\,.

Our goal here is also to add some further evidence to support the use of $\xi^c$ as a measure of
branching in alkanes. While the measure of branching cannot be formally defined, there are several
properties that any proposed measure $T$ has to satisfy \cite{770},
$$
TI (P_n) < TI (X_n) < TI (S_n) \qquad \mbox{or} \qquad TI (P_n) > TI (X_n) > TI (S_n).
$$

Using Theorem \ref{thm-pi}, we have the following chain of inequalities
\begin{equation}
\label{eq:order-balanced} \xi^c (P_{n}) = \xi^c (BS_{n,2}) > \xi^c(BS_{n,3}) > \ldots >
\xi^c(BS_{n,n-2}) > \xi^c(BS_{n,n-1}) = \xi^c (S_{n})\,.
\end{equation}

\subsection{Trees with given number of pendent vertices}

Let $T^{(n, p)}$ the set of trees obtained by attaching $a$ and $p - a$ pendent vertices
respectively to the end vertices of the path $P_{n - p}$\,, for $1 \leq a \leq \lfloor \frac{p}{2}
\rfloor$\,. In \cite{ZhDu09} the authors proved that among $n$-vertex trees with $p$ pendent
vertices, $2 \leq p \leq n - 1$\,, the maximum eccentric connectivity index is achieved exactly of
the trees in $T^{(n, p)}$\,,
$$
\xi^c (T) \leq \left \lfloor \frac{3(n-p+1)^2+1}{2} \right \rfloor + (p - 2)(2n-2p + 1)\,.
$$

Here we determine the $n$-vertex trees with $2 \leq p \leq n - 1$ pendent vertices that have
minimum eccentric connectivity index. In was proven in \cite{IlIl09} that $SB_{n, p}$ has minimal
Laplacian coefficients among $n$-vertex with $k$ pendent vertices. In particular, this extremal
balanced starlike tree $BS_{n,k}$ minimizes the Wiener index and the Laplacian-like energy
\cite{IlKrIl09}.

\begin{thm}
\label{thm:pendent} The balanced $p$-starlike tree $SB_{n, p}$ has minimum eccentric connectivity
index among trees with $p$ pendent vertices, $2 < p < n - 1$\,.
\end{thm}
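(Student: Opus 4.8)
The plan is to identify a minimizer directly and then pin it down with the two monotone transformations already proved. Since there are only finitely many trees on $n$ vertices with exactly $p$ pendent vertices, a minimizer $T^*$ of $\xi^c$ over this family exists. I will argue $T^* \cong SB_{n,p}$ in two stages: first that $T^*$ must be a starlike tree, and then that among starlike trees the balanced one is optimal. The key background fact I will rely on at each step is that both the $\delta$-transform and the $\pi$-transform of Theorem~\ref{thm-pi} leave the number of pendent vertices unchanged, so that they keep us inside the family.

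First I would show $T^*$ is starlike. Suppose not; then $T^*$ has at least two vertices of degree $\geq 3$. Fix the center (root) of $T^*$ and let $v$ be a branching vertex at maximum distance from it. Because more than one branching vertex exists, $v$ lies at positive distance from the center and is therefore non-central; and because $v$ is furthest, any branch of $v$ not containing the center can contain no further branching vertex (such a vertex would be strictly farther from the center than $v$), so each such branch is a pure path. Thus $v$ meets exactly the hypotheses of the $\delta$-transform, with pendent paths $P_1,\dots,P_m$ where $m = deg(v) - 1 \geq 2$, and with $w$ the single neighbor on the central side. Applying Theorem~\ref{thm-delta} produces $\delta(T^*,v)$, which has the same order $n$ and the same $p$ leaves but strictly smaller $\xi^c$, contradicting minimality. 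Hence $T^*$ has a unique branching vertex and is starlike; since it has $p$ leaves, that vertex has degree $p$ and $T^* = T(n_1,\dots,n_p)$ with every $n_i \geq 1$.

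Next I would balance the path lengths. If $T^*$ is not balanced, two of its pendent paths, both attached at the center $w$, have lengths $a \geq b+2$ with $b \geq 1$. Viewing the center together with the other $p-2$ paths as the graph $G$ of Theorem~\ref{thm-pi} and these two paths as $P,Q$, the identification $p = a-1$, $q = b+1$ satisfies $p \geq q \geq 1$, so Theorem~\ref{thm-pi} gives $\xi^c(G(a-1,b+1)) < \xi^c(G(a,b)) = \xi^c(T^*)$; note that $b+1 \geq 2$ keeps both paths nonempty, so the leaf count stays $p$. This again contradicts minimality, so every pair of path lengths of $T^*$ differs by at most one, i.e.\ $T^* \cong SB_{n,p}$. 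Uniqueness of the balanced length multiset (some parts equal to $\lfloor (n-1)/p\rfloor$, the rest to $\lceil (n-1)/p\rceil$) shows the extremal tree is unique.

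The main obstacle is the first stage: I must verify carefully that the furthest branching vertex genuinely fits the $\delta$-transform template, namely that its off-center branches carry no further branching and so are honest pendent paths, and that it is non-central so that the eccentricity bookkeeping behind Theorem~\ref{thm-delta} is valid. The remaining points are routine: checking that neither transformation removes a pendent vertex (the only delicate boundary being $q-1 = 0$ in the balancing step, which is avoided since $b \geq 1$) and invoking finiteness to guarantee a minimizer. The restriction $2 < p < n-1$ merely discards the path case $p = 2$ and the star case $p = n-1$ and ensures there is room for both transformations to act.
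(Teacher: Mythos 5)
Your overall strategy --- existence of a minimizer by finiteness, elimination of extra branching vertices via the $\delta$-transform of Theorem~\ref{thm-delta}, then balancing the resulting starlike tree via Theorem~\ref{thm-pi} --- is essentially the paper's strategy, and your second (balancing) stage is carried out correctly, including the index bookkeeping $p = a-1$, $q = b+1$. The gap is in the first stage, exactly at the point you yourself flag as the main obstacle. Your claim that a branching vertex $v$ at maximum distance from the root is ``at positive distance from the center and therefore non-central'' is false for bicentral trees: a tree may have two \emph{adjacent} centers, both of degree at least $3$, and if these are the only branching vertices, then the furthest branching vertex from the root is the other center, which is central. In that configuration Theorem~\ref{thm-delta} does not apply: its proof rests on the equality $\varepsilon(v) = \varepsilon(w)+1$, which fails because two adjacent centers have equal eccentricity. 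So the contradiction you want never materializes.

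This is not a patchable technicality within your argument, because the conclusion of your first stage is itself false: for $n \equiv 2 \pmod p$ there exist non-starlike minimizers. Concretely, for $n = 6$, $p = 4$ the double star $S_{3,3}$ satisfies $\xi^c(S_{3,3}) = 3\cdot 2 + 3\cdot 2 + 4\cdot 3 = 24 = \xi^c(SB_{6,4})$, and these are the only two trees with $6$ vertices and $4$ pendent vertices; both are minimizers and one of them is not starlike. Hence ``every minimizer is starlike'' cannot be proved, and your closing uniqueness assertion is also false in this case. The paper closes exactly this hole by treating the two-center configuration separately: there the $\delta$-move leaves $\xi^c$ \emph{unchanged} (the degree $m-1$ migrates between two vertices of equal eccentricity, and the moved path vertices keep their eccentricities because the longest path stays behind at $v$), which converts the bicentral tree into a starlike tree of the same index; balancing by Theorem~\ref{thm-pi} then strictly decreases $\xi^c$ unless all pendent paths already have equal length, in which case $n = kp+2$ and there are exactly $\lfloor p/2 \rfloor$ tied extremal trees, one of which is $SB_{n,p}$. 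Your proof needs this additional case analysis to be complete.
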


\begin{proof}
Let $T$ be a rooted $n$-vertex tree with $p$ pendent vertices. If $T$ contains only one branch
vertex of degree greater than two, we can apply Theorem \ref{thm-pi} in order to get balanced
starlike tree $SB_{n, p}$\,, while do not change the number of pendent vertices. If there are
multiple vertices with degree greater than 2, such that there are only pendent paths attached below
them -- we take the one furthest from the center vertex of the tree. By repetitive application of
$\delta$ transformation and balancing pendent paths, the eccentric connectivity index decreases.

Assume that we arrived at tree with two centers $C = \{v, w\}$ with only pendent paths attached at
both centers. If all pendent paths have equal lengths, then $n = k p + 2$\,. Since we can reattach
$p - 2$ pendent paths at any central vertex and do not change $\xi^c (T)$ -- it follows that there
are exactly $\lfloor \frac{p}{2} \rfloor$ extremal trees with minimum eccentric connectivity index
in this special case.

Now, let $R$ be the path with length $r = r (T) - 1$ attached at $v$ and let $Q$ be the shortest
path of length $q$ attached at $w$\,. After applying $\delta$ transformation at vertex $v$\,, the
eccentric connectivity index remains the same. If we apply the transformation from Theorem
\ref{thm-pi} at two pendent paths of lengths $r + 1$ and $q$ attached at $w$\,, we will strictly
decrease the eccentric connectivity index. Finally, we conclude that $SB_{n, p}$ is the unique
extremal tree that minimizes $\xi^c$ among $n$-vertex trees with $p$ pendent vertices for $n \not
\equiv 2 \pmod p$\,.
\end{proof}

\subsection{Trees with bounded vertex degree}

Chemical trees (trees with maximum vertex degree at most four) provide the graph representations of
alkanes \cite{GuPo86}. It is therefore a natural problem to study trees with bounded maximum
degree. Denote by $\Delta = \Delta(T)$ the maximum vertex degree of a tree $T$\,. The path $P_n$ is
the unique tree with $\Delta = 2$\,, while the star $S_n$ is the unique tree with $\Delta = n-1$\,.
Therefore, we can assume that $3 \leq \Delta \leq n - 2$\,.

The broom $B_{n, \Delta}$ is a tree consisting of a star $S_{\Delta + 1}$ and a path of length $n -
\Delta - 1$ attached to an arbitrary pendent vertex of the star (see Fig. 2). It is proven in
\cite{LiGu07} that among trees with maximum vertex degree equal to $\Delta$\,, the broom $B_{n,
\Delta}$ uniquely minimizes the largest eigenvalue of the adjacency matrix. In \cite{YaYe05} and
\cite{YuLv06} it was demonstrated that the broom has minimum energy among trees with, respectively,
fixed diameter and fixed number of pendent vertices. \vspace{0.2cm}

\begin{figure}[ht]
  \center
  \includegraphics [width = 8cm]{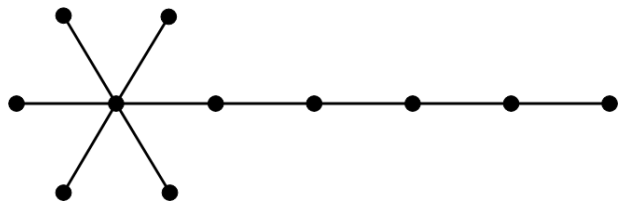}
  \caption { \textit{ The broom $B_{11, 6}$\,. } }
\end{figure}

Notice that the broom $B_{n, \Delta} = T (n - \Delta, 1, 1, \ldots, 1)$ is a $\Delta$-starlike
tree.

\begin{thm}
\label{thm-broom} Let $T \not \cong B_{n, \Delta}$ be an arbitrary tree on $n$ vertices with
maximum vertex degree $\Delta$\,. Then
$$
\xi^c (B_{n, \Delta}) > \xi^c (T) \ .
$$
\end{thm}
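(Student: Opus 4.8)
The plan is to transform an arbitrary tree $T \not\cong B_{n,\Delta}$ with maximum degree $\Delta$ into the broom by a chain of steps, each of which does not decrease $\xi^c$ and at least one of which strictly increases it. The whole argument will rest on the monotone transformation of Theorem~\ref{thm-pi} (lengthening one pendent path at the expense of another), and I would deliberately avoid the $\delta$-transform of Theorem~\ref{thm-delta}, since that one \emph{decreases} $\xi^c$ and hence points the wrong way for a maximization. Fix once and for all a vertex $u$ with $deg(u)=\Delta$ and root $T$ at $u$.

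First I would reduce $T$ to a starlike tree (spider) centered at $u$. As long as $T$ has a branch vertex other than $u$, pick the one, say $v$, of greatest depth in the rooted tree. By the choice of $v$, none of its proper descendants is a branch vertex, so every subtree hanging below $v$ is a pendent path; thus $v$ carries $deg(v)-1\geq 2$ pendent paths together with the single edge toward $u$. Applying Theorem~\ref{thm-pi} repeatedly at $v$ — each application moving one unit from a shorter pendent path to a longer one, the terminal case $q=1\mapsto q-1=0$ merging two paths into a single one — I collapse all of $v$'s pendent paths into one pendent path. This strictly increases $\xi^c$, turns $v$ into a vertex of degree $2$, and lowers the number of branch vertices by one. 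Crucially, $u$ is never touched, so $deg(u)=\Delta$ is preserved and no degree ever exceeds $\Delta$. Iterating, I arrive at a spider $T(m_1,\dots,m_\Delta)$ with exactly $\Delta$ pendent paths emanating from $u$ and $\sum_i m_i=n-1$.

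Next, with $m_1\geq\cdots\geq m_\Delta\geq 1$, I would run Theorem~\ref{thm-pi} in the unbalancing direction: while two of the pendent paths have length at least two, lengthen the longer at the expense of the shorter, never reducing a path below length $1$ so that $deg(u)=\Delta$ is maintained. Each such move strictly increases $\xi^c$, and the process terminates precisely at $T(n-\Delta,1,\dots,1)=B_{n,\Delta}$. Combining the two stages gives a chain $\xi^c(T)\leq\cdots\leq\xi^c(B_{n,\Delta})$ in which every transformation actually performed is strict; since $T\not\cong B_{n,\Delta}$, at least one transformation is performed, which yields $\xi^c(T)<\xi^c(B_{n,\Delta})$.

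The main obstacle is the bookkeeping around the degree constraint rather than any delicate estimate: I must ensure that every collapsing move in the first stage is carried out at a branch vertex different from the fixed maximum-degree vertex $u$, for otherwise the maximum degree could drop below $\Delta$ and the tree would leave the admissible class. Fixing $u$ as the root and always operating at the \emph{deepest} remaining branch vertex guarantees $v\neq u$ whenever a second branch vertex exists, which is exactly what keeps the construction inside the family of trees with maximum degree $\Delta$ while Theorem~\ref{thm-pi} supplies the strict monotonicity at each step.
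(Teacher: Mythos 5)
Your proposal is correct and follows essentially the same route as the paper's proof: root the tree at a fixed vertex of degree $\Delta$, repeatedly apply Theorem~\ref{thm-pi} at the deepest (most eccentric) branch vertices until every subtree attached at the root becomes a path, and then apply Theorem~\ref{thm-pi} at the root itself while keeping each leg of length at least one, arriving at $B_{n,\Delta}$ through strictly increasing steps. Your version merely makes explicit the bookkeeping the paper leaves implicit (preservation of the degree constraint and the source of strictness); note also that the paper's phrase ``further decreasing the eccentric connectivity index'' in this proof is evidently a typo for ``increasing,'' consistent with the direction of Theorem~\ref{thm-pi}.
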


\begin{proof}
Fix a vertex $v$ of degree $\Delta$ as a root and let $T_1, T_2, \ldots, T_{\Delta}$ be the trees
attached at~$v$\,. We can repeatedly apply the transformation described in Theorem~\ref{thm-pi} at
any vertex of degree at least three with largest eccentricity from the root in every tree~$T_i$\,,
as long as $T_i$ does not become a path. When all trees $T_{1} ,T_{2},\dots, T_{\Delta}$ turn into
paths, we can again apply transformation from Theorem~\ref{thm-pi} at the vertex~$v$ as long as
there exists at least two paths of length greater than one, further decreasing the eccentric
connectivity index. Finally, we arrive at the broom $B_{n, \Delta}$ as the unique tree with maximum
eccentric connectivity index.
\end{proof}

By direct verification, it holds
$$
\xi^c (BT_{n, \Delta}) = \left \lfloor \frac{3n^2 - 2\Delta n - 2n - \Delta^2 +4\Delta}{2} \right
\rfloor .
$$

From the above proof, we also get that $B'_{n,\Delta} = T(n-\Delta-1,2,1,\ldots,1)$ has the second
minimal $\xi^c$ among trees with maximum vertex degree $\Delta$\,.

From Theorem~\ref{thm-broom} we know that the
maximum eccentric connectivity index among trees on $n$~vertices is achieved for one of the brooms
$B_{n,\Delta}$\,. If $\Delta>2$\,, we can apply the transformation from Theorem~\ref{thm-pi} at the
vertex of degree~$\Delta$ in $B_{n, \Delta}$ and obtain $B_{n, \Delta-1}$\,. Thus, it follows
$$
\xi^c (S_{n}) = \xi^c(B_{n,n-1}) < \xi^c(B_{n,n-2}) < \cdots < \xi^c(B_{n,3})<
\xi^c(B_{n,2})=\xi^c(P_{n}) \ .
$$

Also, it follows that $B_{n, 3}$ has the second maximum eccentric connectivity index ($\lfloor 3
(n-1)^2 / 2 \rfloor - n$) among trees on $n$ vertices.

\begin{thm}
\label{thm-rot} Let $T$ be a rooted tree, with a center vertex $c$ as root. Let $u$ be the vertex
closest to the root vertex, such that $deg (u) < \Delta$\,. Let $w$ be the pendent vertex most
distant from the root, adjacent to vertex $v$\,, such that $\varepsilon (v) > \varepsilon (u)$\,.
Construct a tree $T'$ by deleting the edge $vw$ and inserting the new edge $uw$\,. Then
$$
\xi^c (T) > \xi^c (T') \ .
$$
\end{thm}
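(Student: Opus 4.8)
The plan is to track precisely which degrees and which eccentricities change when the pendent vertex $w$ is moved from $v$ to $u$, and then to group the terms so that the single hypothesis $\varepsilon(v)>\varepsilon(u)$ forces the strict gain. First I observe that $T'$ is again a tree (deleting the pendent edge $vw$ isolates the leaf $w$, and the new edge $uw$ reattaches it), that $w$ stays pendent, and that the only degrees that change are $deg'(u)=deg(u)+1$ and $deg'(v)=deg(v)-1$. Writing $\varepsilon,deg$ for $T$ and $\varepsilon',deg'$ for $T'$, I would split the difference as
$$
\xi^c(T)-\xi^c(T')=\sum_{x\notin\{u,v,w\}} deg(x)\bigl(\varepsilon(x)-\varepsilon'(x)\bigr)+\bigl(\varepsilon(w)-\varepsilon'(w)\bigr)+\bigl[deg(u)\varepsilon(u)-(deg(u)+1)\varepsilon'(u)\bigr]+\bigl[deg(v)\varepsilon(v)-(deg(v)-1)\varepsilon'(v)\bigr].
$$

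The key step, and the main obstacle, is to show that no eccentricity increases: $\varepsilon'(x)\le\varepsilon(x)$ for every $x\neq w$. Since $w$ is a leaf, relocating it leaves all distances among the remaining vertices untouched, so the only contribution that can change is the distance to $w$; hence $\varepsilon'(x)=\max\{\varepsilon_{-w}(x),\,d(x,u)+1\}$ and $\varepsilon(x)=\max\{\varepsilon_{-w}(x),\,d(x,v)+1\}$, where $\varepsilon_{-w}$ denotes eccentricity computed ignoring $w$. It therefore suffices to prove $d(x,u)+1\le\varepsilon(x)$. Here I would invoke the standard tree identity $\varepsilon(y)=d(y,C)+r$, where $C$ is the center and $r=r(T)$ the radius, so that $d(x,C)=\varepsilon(x)-r$ and $d(u,C)=\varepsilon(u)-r$. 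The triangle inequality through the center, combined with the fact that for a tree the diameter equals $2r$ or $2r-1$, then yields $d(x,u)+1\le\varepsilon(x)$: concretely $\varepsilon(u)<\varepsilon(v)\le\operatorname{diam}(T)$ forces $\varepsilon(u)$ to be small enough to absorb the at-most-one-unit slack that a second center could introduce (single center: $\varepsilon(u)\le 2r-1$; two centers: $\varepsilon(u)\le 2r-2$). For the remaining vertex $w$ itself one checks directly $\varepsilon(w)=1+\varepsilon(v)$ and $\varepsilon'(w)=1+\varepsilon'(u)\le 1+\varepsilon(u)<1+\varepsilon(v)=\varepsilon(w)$, so its eccentricity strictly drops.

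Finally I assemble the estimate. The first sum is nonnegative because each $\varepsilon(x)-\varepsilon'(x)\ge 0$, and the $w$-term is strictly positive by the previous line. Applying $\varepsilon'(v)\le\varepsilon(v)$ to the $v$-bracket gives at least $deg(v)\varepsilon(v)-(deg(v)-1)\varepsilon(v)=\varepsilon(v)$, and applying $\varepsilon'(u)\le\varepsilon(u)$ to the $u$-bracket gives at least $deg(u)\varepsilon(u)-(deg(u)+1)\varepsilon(u)=-\varepsilon(u)$. Hence $\xi^c(T)-\xi^c(T')\ge\varepsilon(v)-\varepsilon(u)>0$ by hypothesis. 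Thus the whole strict inequality is driven purely by the condition $\varepsilon(v)>\varepsilon(u)$, with no delicate accounting of the individual eccentricity decreases required; the role of the remaining hypotheses ($u$ closest to the root with $deg(u)<\Delta$, and $w$ the most distant pendent vertex) is only to guarantee that $T'$ still respects the maximum degree $\Delta$ and that the transformation makes genuine progress toward the extremal tree.
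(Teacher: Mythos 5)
Your proof is correct, and its overall skeleton matches the paper's: the degrees change only at $u$ and $v$, no eccentricity increases, and the difference $\xi^c(T)-\xi^c(T')$ is bounded below using only the terms at $u$, $v$, $w$, with strictness coming from $\varepsilon(v)>\varepsilon(u)$. Where you genuinely depart from the paper is in the justification of the key monotonicity claim $\varepsilon'(x)\le\varepsilon(x)$. The paper disposes of it in one sentence, appealing to a vertex $w'$ at distance $r(T)$ or $r(T)-1$ from the center in another subtree, together with an ``if and only if'' characterization of when eccentricities drop that is not actually needed for the bound and is not accurate as stated (one can have other vertices at distance $r(T)$ from the center and still see some eccentricities strictly decrease). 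You instead prove the claim from the identity $\varepsilon(x)=d(x,C)+r$ and the dichotomy $\mathrm{diam}(T)\in\{2r,2r-1\}$, noting that $\varepsilon(u)<\varepsilon(v)\le\mathrm{diam}(T)$ caps $d(u,C)$ at $r-1$ (central case) or $r-2$ (bicentral case), which is exactly the slack needed for the triangle inequality through the center to give $d(x,u)+1\le\varepsilon(x)$. This buys rigor where the paper is sketchy, and it also exposes that the hypotheses ``$u$ closest to the root'' and ``$w$ most distant pendent vertex from the root'' are never used: your argument needs only that $w$ is pendent and $\varepsilon(v)>\varepsilon(u)$, those extra conditions serving (as you note) only the application in Theorem~\ref{thm-volkman}, where the maximum degree must be respected and progress guaranteed. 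The small price is a weaker constant: the paper additionally uses the quantitative drop $\varepsilon(w)-\varepsilon'(w)\ge\varepsilon(v)-\varepsilon(u)$ at the pendent vertex and obtains $\xi^c(T)-\xi^c(T')\ge 2\,(\varepsilon(v)-\varepsilon(u))$, whereas your assembly yields $\varepsilon(v)-\varepsilon(u)$; both suffice for the strict inequality claimed.
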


\begin{proof}
In the transformation $T \to T'$ the degrees of vertices other than $u$ and $v$ remain the same,
while $deg' (u) = deg (u) + 1$ and $deg' (v) = deg (v) - 1$\,. Since the tree is rooted at the
center vertex, the radius of $T$ is equal to $r (T) = d (c, w)$\,. Furthermore, there exists a
vertex $w'$ in a different subtree attached to the center vertex, such that $d (c, w') = r (T)$ or
$d (c, w') = r (T) - 1$\,. From the condition $\varepsilon (v) > \varepsilon (u)$\,, it follows
that $d (c, w') > d (c, u)$ and $w' \neq u$\,.

By rotating the edge $vw$ to $uw$\,, the eccentricity of vertices other than $w$ decrease if and
only if $w$ is the only vertex at distance $r (T)$ from the center vertex. Otherwise the
eccentricities remain the same. In both cases,
\begin{eqnarray*}
\xi^c (T) - \xi^c (T') &\geq& deg (v)\,\varepsilon (v) + deg (w)\,\varepsilon (w)
+ deg (u)\,\varepsilon (u)\\
&-& \left[ deg' (v)\,\varepsilon' (v) + deg' (w)\,\varepsilon' (w) +
deg' (u)\,\varepsilon' (u) \right] \\
&\geq& \varepsilon (v) + (\varepsilon (v) - \varepsilon (u)) - \varepsilon (u) = 2 (\varepsilon (v)
- \varepsilon (u)) > 0 \ .
\end{eqnarray*}
This completes the proof.
\end{proof}

The Volkmann tree $VT (n, \Delta)$ is a tree on $n$ vertices and maximum vertex degree $\Delta$\,,
defined as follows \cite{770,FiHo02}. Start with the root having $\Delta$ children. Every vertex
different from the root, which is not in one of the last two levels, has exactly $\Delta -1$
children. In the last level, while not all vertices need to exist, the vertices that do exist fill
the level consecutively. Thus, at most one vertex on the level second to last has its degree
different from $\Delta$ and $1$\,. In \cite{770,FiHo02} it was shown that among trees with fixed
$n$ and $\Delta$\,, the Volkmann trees have minimum Wiener index. Volkmann trees have also other
extremal properties among trees with fixed $n$ and $\Delta$ \cite{GuFMG07,790,SiTo05,YuLu08}.

\begin{figure}[ht]
  \center
  \includegraphics [width = 6cm]{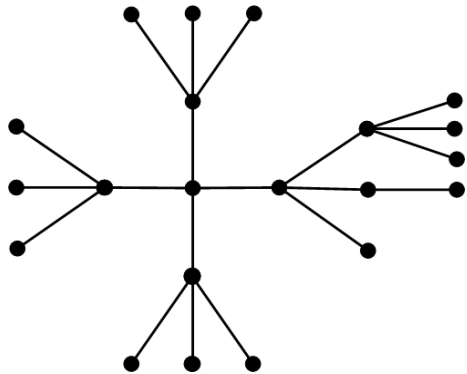}
  \caption { \textit{ The Volkmann tree $VT (21, 4)$\,. } }
\end{figure}

\begin{thm}
\label{thm-volkman} Let $T$ be an arbitrary tree on $n$ vertices with maximum vertex
degree~$\Delta$\,. Then
$$
\xi^c (T) \geq \xi^c (VT_{n, \Delta}).
$$
\end{thm}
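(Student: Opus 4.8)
The plan is to show that every tree on $n$ vertices with maximum degree $\Delta$ can be transformed, by a sequence of operations each of which does not increase $\xi^c$, into the Volkmann tree $VT_{n,\Delta}$. The guiding intuition is that $VT_{n,\Delta}$ is the most compact tree with the prescribed maximum degree: it packs as many vertices as possible onto the levels closest to the center, so its eccentricities are as small as possible, and since the degree sum $\sum_{v} deg(v) = 2(n-1)$ is fixed, smallest eccentricities should force smallest $\xi^c$. The principal tool will be the edge-rotation of Theorem~\ref{thm-rot}, which is tailored precisely to fill up vertex degrees near the root at the expense of distant leaves while strictly decreasing $\xi^c$.

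First I would root $T$ at a center vertex $c$ and locate the vertex $u$ closest to $c$ with $deg(u) < \Delta$. As long as $u$ does not lie on one of the two deepest levels of $T$, there exist vertices strictly deeper than $u$; choosing $w$ to be a deepest pendent vertex and $v$ its neighbor, the fact that $T$ is rooted at its center guarantees $\varepsilon(v) > \varepsilon(u)$, so the hypotheses of Theorem~\ref{thm-rot} are met and the rotation $vw \to uw$ strictly decreases $\xi^c$, raising $deg(u)$ by one without creating any vertex of degree exceeding $\Delta$. Iterating this rotation saturates the upper part of the tree: the root acquires $\Delta$ children and every internal vertex above the last two levels acquires exactly $\Delta-1$ children. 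This is precisely the structure prescribed in the definition of $VT_{n,\Delta}$ down to the penultimate level.

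It then remains to fix the arrangement on the final two levels, and this is where I expect the main difficulty. Once all higher levels are saturated, the number of vertices on each full level is determined by $n$ and $\Delta$, and the leftover vertices must occupy the last level. I would argue that distributing these leftover leaves consecutively under a single penultimate vertex --- the Volkmann configuration --- minimizes $\xi^c$: any other distribution either pushes a leaf onto a strictly deeper level, increasing $r(T)$ and hence the eccentricities of many vertices, or can be rebalanced by Theorem~\ref{thm-pi} into the consecutive configuration with a strict decrease of $\xi^c$. Combining the monotone decrease along the whole chain of transformations then yields $\xi^c(T) \geq \xi^c(VT_{n,\Delta})$.

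The delicate points I would have to check carefully are, first, that the rotation of Theorem~\ref{thm-rot} keeps $c$ (or the at-most-two-vertex center) a valid root throughout the iteration, so that the comparison $\varepsilon(v) > \varepsilon(u)$ remains available at each step; and second, the last-level analysis, since the eccentricities on the bottom two levels interact with the exact height of the tree and must be accounted for directly rather than only through the monotone transformations. Verifying that the process genuinely terminates at $VT_{n,\Delta}$, and not at some other saturated configuration with the same level counts, is the crux of the argument.
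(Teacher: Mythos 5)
Your plan runs on the same engine as the paper's proof --- root the tree at a center and use the rotation of Theorem~\ref{thm-rot} to saturate degrees close to the root --- with only the cosmetic difference that you transform an arbitrary tree monotonically while the paper argues on a minimizer. But your two key claims are false as stated, and they fail exactly where the real work lies. First, rooting at the center with $u$ strictly above the two deepest levels does \emph{not} guarantee $\varepsilon (v) > \varepsilon (u)$. Centrality only guarantees a second branch of depth at least $r - 1$, which yields $\varepsilon (v) \geq 2r - 2$, while $\varepsilon (u) \leq d (c, u) + r \leq 2r - 2$ when $d (c, u) \leq r - 2$; the two bounds meet, and equality genuinely occurs. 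Take $\Delta = 3$ and the $11$-vertex tree $T_0$: root $c$ with children $y_1$ (a leaf), $x_1$ (carrying a single leaf $x_2$), and $c'$, where $c'$ has children $a$ and $b$, each with two leaf children. Then $r = 3$, the bicenter is $\{c, c'\}$, the vertex of degree less than $3$ closest to $c$ is $x_1$ at depth $1 = r - 2$, and for a deepest leaf $w = a_1$ with parent $v = a$ one finds $\varepsilon (v) = \varepsilon (x_1) = 4$. So the hypothesis of Theorem~\ref{thm-rot} is not met, the rotation gives no strict decrease (it leaves $\xi^c = 79$ unchanged), and with it goes your implicit termination argument, since a strictly decreasing integer quantity was the only thing driving the iteration. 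The paper confronts precisely this: it derives only $\varepsilon (v) \geq \varepsilon (u)$, treats the equality case separately (asserting the rotation is then neutral --- a fact Theorem~\ref{thm-rot} does not cover and which needs its own proof), and argues that finitely many neutral rotations eventually enable a strict decrease. Any correct version of your argument must do the same and additionally exhibit a decreasing potential, e.g.\ the number of vertices at maximum depth.

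Second, the endgame you call the crux rests on a false statement. The same tree $T_0$ satisfies $\xi^c (T_0) = 79 = \xi^c (VT (11, 3))$, even though its level sizes are $(3,3,4)$ rather than the Volkmann $(3,6,1)$: non-Volkmann arrangements of the last two levels can be \emph{tied} with the Volkmann tree, so no strictly decreasing procedure can carry them to $VT_{n, \Delta}$. (This non-uniqueness is exactly why the paper points to Table 1 of \cite{IlGu10}, which counts the extremal trees, of which the Volkmann tree is only one.) Moreover, Theorem~\ref{thm-pi} is the wrong tool for this stage: it rebalances two pendent paths attached at a \emph{common} vertex, whereas moving a leaf from one penultimate vertex to another is again a rotation. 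The correct endgame, as in the paper, is an equality argument, and it is genuinely case-dependent: when the leftover vertices exceed a full $k$-th level their placement is free, but in the bicentral case distributions can differ --- for $\Delta = 3$, $n = 6$, putting both last-level leaves under one parent gives $\xi^c = 24$ while splitting them between two parents gives $29$. You must prove that every configuration surviving the rotation phase has $\xi^c$ exactly equal to $\xi^c (VT_{n, \Delta})$; without that, your chain of non-increasing steps is not known to terminate at the Volkmann value.
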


\begin{proof}
Among $n$-vertex trees with maximum degree $\Delta$\,, let $T^*$ extremal tree with minimum
eccentric connectivity index. Assume that $u$ is a vertex closest to the root vertex $c$\,, such
that $deg (u) < \Delta$ and let $w$ be the pendent vertex most distant from the root, adjacent to
vertex $v$\,. Also, let $k$ be the greatest integer, such that
$$
n \geq 1 + \Delta + \Delta (\Delta - 1) + \Delta (\Delta - 1)^2 + \cdots + \Delta (\Delta - 1)^{k -
1}\ .
$$

First, we will show that the radius of $T^*$ has to be less than or equal to $k + 1$\,. Assume that
$r (T^*) = d (c, w) > k + 1$\,. Since the distance from the center vertex to $u$ is less than or
equal to $k$\,, it follows that
$$
\varepsilon (v) \geq 2 r (T^*) - 2 \geq k + r (T^*) \geq \varepsilon (u)\ .
$$
If strict inequality holds, we can apply Theorem \ref{thm-rot} and decrease the eccentric
connectivity index -- which is in contradiction with the assumption that $T^*$ is the tree with
minimum $\xi^c$\,. Therefore, $\varepsilon (v) = \varepsilon (u)$ and after performing
transformation from Theorem \ref{thm-rot} the eccentric connectivity index does not change.
According to the definition of number $k$\,, after finitely many transformations the vertex $w$
will be the only vertex at distance $r (T)$ from the center vertex and we will strictly decrease
$\xi^c (T^*)$\,. Also, this means that for the case $n = 1 + \Delta + \Delta (\Delta - 1) + \Delta
(\Delta - 1)^2 + \cdots + \Delta (\Delta - 1)^{k - 1}$\,, the Volkmann tree is the unique tree with
minimum eccentric connectivity index.

Now, we can assume that the radius of $T^*$ is equal $k + 1$\,. If the distance $d (c, u)$ is less
than $k - 1$\,, it follows again that $\varepsilon (v) > \varepsilon (u)$\,, which is impossible.
Therefore, the levels $1, 2, \ldots, k - 1$ are full (level $i$ contains exactly $\Delta (\Delta -
1)^{i - 1}$ vertices), while the $k$-th and $(k + 1)$-th level contain
$$
L = n - \left[ 1 + \Delta + \Delta (\Delta - 1) + \Delta (\Delta - 1)^2 + \cdots + \Delta (\Delta -
1)^{k - 1} \right]
$$
vertices.

Assume that $T^*$ has only one center vertex -- then $d (c, w) = k + 1$ and $\varepsilon (v) = 2 r
(T^*) - 1$\,. If $d (c, u) = k - 1$\,, we can apply transformation from Theorem \ref{thm-rot} and
strictly decrease the eccentric connectivity index. Thus, for $L > (\Delta - 1)^k$\,, the $k$-th
level is also full and the pendent vertices in $(k + 1)$-th level can be arbitrary assigned. Using
the same argument, for $L \leq (\Delta - 1)^k$ the extremal trees are bicentral. By completing the
$k$-th level, we do not change the eccentric connectivity index -- since $\varepsilon (v) =
\varepsilon (u)$\,. Finally, $\xi^c (T^*) = \xi^c (VT (n, \Delta))$ and the result follows.
\end{proof}

In Table 1 from \cite{IlGu10}, the authors presented the minimum values of eccentric connectivity
index among $n \leq 20$ vertex trees with maximum vertex degree $\Delta$\,, together with the
number of such extremal trees (of which one is the Volkman tree).

\subsection{Trees with given matching or independence number}

Two distinct edges in a graph $G$ are independent if they are not incident with a common vertex in
$G$\,. A set of pairwise independent edges in $G$ is called a matching in $G$\,, while a matching
of maximum cardinality is a maximum matching in $G$\,. The matching number $\beta(G)$ of $G$ is the
cardinality of a maximum matching of $G$\,. It is well known that $\beta(G) \leq \frac{n}{2}$, with
equality if and only if $G$ has a perfect matching. The independence number of $G$\,, denoted by
$\alpha(G)$\,, is the size of a maximum independent set of $G$\,.

If $\frac{n-1}{2} < m \leq n-1$\,, then $A_{n,m}$ is the tree obtained from $S_{m+1}$ by adding a
pendent edge to some of $n-m-1$ of the pendent vertices of $S_{m+1}$\,. We call $A_{n,m}$ a spur
(see Fig. 4). Clearly, $A_{n,m}$ has $n$ vertices and $m$ pendent vertices; the matching number,
independence number and domination number of $A_{n,m}$ are $n-m$\,, $m$ and  $n-m$\,, respectively.

\begin{figure}[h]
  \center
  \includegraphics [width = 5cm]{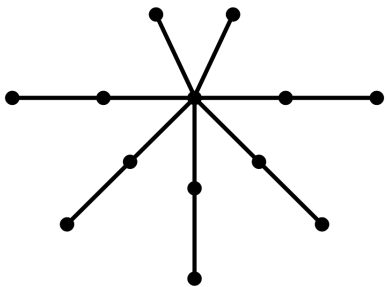}
  \caption { \textit{ The spur $A_{13, 7}$\,. } }
\end{figure}

It can be easily seen that
\begin{equation}
\label{eq:spur} \xi(A_{n,m}) = \left\{
\begin{array}{l}
3n-3, \qquad \quad \quad \mbox{if $m = n - 1$}\,; \\
5n-6, \qquad \qquad \mbox{if $m = n - 2$}\,; \\
7n-2m-7, \quad \ \mbox{if $\frac{n-1}{2} < m < n - 2$\,.} \\
\end{array}
\right.
\end{equation}

Double star $S_{a,b}$ is a tree obtained by joining the centers of two stars $S_a$ and $S_b$ with
an edge. The only tree with $\beta = 1$ is star $S_n$\,, while for $\beta = 2$ we have double stars
$S_{a,b}$ with $a + b = n$\,. It can be easily verified that all double stars have the same
eccentric connectivity index equal to $5n-6$\,.

\begin{thm}
\label{thm:matching} Let $T$ be a tree on $n$ vertices with matching number $\beta > 2$\,. Then
$$
\xi^c (T) \geq \xi^c(A_{n,n-\beta})\,,
$$
with equality holding if and only if $T\cong A_{n,n-\beta}$\,.
\end{thm}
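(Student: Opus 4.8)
The plan is to prove the bound in two stages: first reduce an arbitrary minimizer to a tree of diameter $4$ while keeping the matching number fixed, and then solve the resulting optimization among diameter-$4$ trees exactly. Since a tree of diameter $2$ is a star (with $\beta=1$) and a tree of diameter $3$ is a double star (with $\beta=2$), the hypothesis $\beta>2$ forces every candidate to have diameter at least $4$. So it suffices to show (a) that a minimizer has diameter exactly $4$, and (b) that among diameter-$4$ trees with matching number $\beta$ the spur $A_{n,n-\beta}$ is the unique minimizer, with $\xi^c(A_{n,n-\beta})=5n+2\beta-7$ as given by \eqref{eq:spur}.

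The clean core is stage (b). A tree $T$ of even diameter $4$ has a unique center $c$ and radius $2$, so every vertex lies within distance $2$ of $c$; write $s=\deg(c)$, let $a$ be the number of neighbours of $c$ carrying at least one further leaf (the \emph{branch} neighbours, with $t_i\ge 1$ leaves each) and $b$ the number of neighbours of $c$ that are themselves leaves, so $s=a+b$ and $a\ge 2$. Tallying degrees and eccentricities ($c$ has eccentricity $2$, branch neighbours and pure leaves eccentricity $3$, the distance-$2$ leaves eccentricity $4$) and using $n=1+s+\sum_i t_i$ collapses to the pleasant identity
$$
\xi^c(T)=7n-7-2\deg(c),
$$
so minimising $\xi^c$ over diameter-$4$ trees is the same as \emph{maximising} the center degree. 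The matching number enters as a constraint: a maximum matching saturates each branch through one of its leaves and the center through a pure leaf when one is available, giving $\beta=a+1$ if $b\ge 1$ and $\beta=a$ if $b=0$. Since $\sum_i t_i\ge a$, one gets $s\le n-\beta$, with equality precisely when $b\ge 1$ and every branch carries exactly one leaf — which is exactly $A_{n,n-\beta}$; the case $b=0$ forces $s=\beta\le n-\beta-1<n-\beta$ and is strictly worse. Substituting $\deg(c)=n-\beta$ gives $\xi^c=7n-7-2(n-\beta)=5n+2\beta-7$ and establishes both the bound and the uniqueness for diameter-$4$ trees.

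The real work is stage (a): producing $\beta$-preserving, $\xi^c$-decreasing transformations that drive the diameter down to $4$. The key primitive I would isolate is the relocation of a \emph{pendant $P_2$} — a degree-$2$ vertex $x$ with a pendant leaf $y$, where $x$ is attached to some $z$ — detaching $xy$ from $z$ and reattaching $x$ to a vertex nearer $c$. This preserves the matching number because attaching a pendant $P_2$ to \emph{any} vertex of a fixed graph $H$ raises $\beta$ by exactly $1$ (one can always match the edge $xy$, and conversely any matching uses at most one edge meeting $\{x,y\}$), so $\beta$ is independent of the attachment point. Moving such a $P_2$ toward $c$ strictly lowers the eccentricities of $x$ and $y$ without raising any others, decreasing $\xi^c$ by an argument parallel to Theorems \ref{thm-delta} and \ref{thm-rot}.

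The main obstacle — and the part needing genuine care — is that a tree of diameter $\ge 5$ need not contain a relocatable pendant $P_2$ at all: a caterpillar whose spine vertices each carry two or more leaves has \emph{no} degree-$2$ vertex, yet can have arbitrarily large diameter. For such configurations one must supply additional matching-safe surgeries, for instance relocating an entire pendant star (a spine endpoint together with its leaves) toward the center, or shifting a surplus leaf off a deep high-degree vertex; each must be checked to keep $\beta$ fixed and $\xi^c$ strictly decreasing. It is important that the earlier transformations cannot simply be quoted here: path-balancing (Theorem \ref{thm-pi}) changes the parities of pendant paths, and the leaf-rotation of Theorem \ref{thm-rot} detaches a single leaf, so \emph{both can alter the matching number} and the reduction must be rebuilt from scratch using only matching-preserving moves. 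Organising these moves into a terminating sequence that always reaches diameter $4$, while tracking exactly when the decrease is strict so as to pin down uniqueness, is where the difficulty concentrates.
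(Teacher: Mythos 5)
Your stage (b) is correct and complete: for a diameter-$4$ tree the identity $\xi^c(T)=7n-7-2\deg(c)$ holds, and maximizing the center degree under the matching constraint does isolate the spur $A_{n,n-\beta}$ with value $5n+2\beta-7$, in agreement with \eqref{eq:spur}. The gap is stage (a), which is where essentially all the content of the theorem sits, and which you describe rather than prove. You verify exactly one $\beta$-preserving primitive (relocating a pendant $P_2$), observe yourself that it can be unavailable in trees of diameter at least $5$ (a caterpillar whose spine vertices all carry two or more leaves has no degree-$2$ vertex), and then defer to unspecified ``additional matching-safe surgeries'' that ``must be checked''. These are not routine to check, and at least one of the moves you name fails: shifting a surplus leaf toward the center is \emph{not} $\beta$-preserving in general. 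Take $c$ adjacent to $v_1$ and $v_2$, each carrying two leaves ($n=7$): here $\beta=2$, but moving one leaf from $v_1$ onto $c$ produces a tree with $\beta=3$ (and smaller $\xi^c$), so admitting this move lets the minimizer escape the constraint class. The catalogue of legal moves, the proof that each one strictly decreases $\xi^c$ while fixing $\beta$, the termination argument reaching diameter exactly $4$, and the equality analysis are all missing; as it stands the proposal is a program, not a proof.

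The paper's proof avoids matching-preserving surgery entirely, and the contrast exposes the missing idea. It uses $\beta$ only once: in a tree on $n\geq 3$ vertices no edge has two leaf endpoints, so a maximum matching contains at most $\beta$ matched leaves, and together with the $n-2\beta$ unmatched vertices the number $k$ of pendent vertices satisfies $k\leq \beta+(n-2\beta)=n-\beta$. One then \emph{relaxes} to the class of trees with $k$ pendent vertices: Theorem \ref{thm:pendent} gives $\xi^c(T)\geq\xi^c(SB_{n,k})$, and the chain \eqref{eq:order-balanced} (where the same trees are written $BS_{n,k}$) gives $\xi^c(SB_{n,k})\geq\xi^c(SB_{n,n-\beta})$ because $k\leq n-\beta$. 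The relaxation costs nothing because $n-\beta\geq n/2$ forces the balanced starlike tree on $n-\beta$ branches to have all branch lengths $1$ or $2$, i.e.\ $SB_{n,n-\beta}\cong A_{n,n-\beta}$, whose matching number is exactly $\beta$. So instead of demanding that every transformation preserve $\beta$, one bounds a monotone statistic (the leaf count) in terms of $\beta$, optimizes in the relaxed class where the transformations of Theorems \ref{thm-pi} and \ref{thm-delta} are already available, and checks a posteriori that the relaxed optimum attains the original constraint. If you want to keep your diameter-$4$ computation, the quickest repair is to replace your stage (a) by this relaxation step rather than by a family of $\beta$-preserving surgeries.
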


\begin{proof}
Suppose that $T$ has $k$ pendent vertices. For every edge $e=uv$ in the maximum matching holds: at
most one of the vertices $v$ and $u$ has degree equal to one. Therefore,
$$
k\leq \beta+n-2\beta=n-\beta\,,
$$
and by Theorem \ref{thm:pendent}, we have $\xi^c(T)\geq \xi^c(BS_{n,k})$\,. Using Equation
(\ref{eq:order-balanced}), it follows $\xi^c(BS_{n,k}) \geq \xi^c(BS_{n,n-\beta}) =
\xi^c(A_{n,n-\beta})$ since $n-\beta \geq \frac{n}{2}$\,. Finally, $\xi^c(T)\geq
\xi^c(A_{n,n-\beta})$\,, with equality holding if and only if $T\cong A_{n,n-\beta}$\,.
\end{proof}

By Theorem \ref{thm:matching}, we have the following
\begin{cor}
Let $T$ be a tree of order $n$ with perfect matching. Then
$$
\xi^c(T) \geq 6n - 7,
$$
with equality holding if and only if $T\cong A_{n,n/2}$\,.
\end{cor}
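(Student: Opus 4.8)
The plan is to derive this directly from Theorem~\ref{thm:matching} together with the explicit spur formula \eqref{eq:spur}. First I would observe that a tree with a perfect matching has matching number $\beta = n/2$, which in particular forces $n$ to be even; moreover, since Theorem~\ref{thm:matching} requires $\beta > 2$, I need $n/2 > 2$, i.e. $n \geq 6$. Under this hypothesis, substituting $\beta = n/2$ into Theorem~\ref{thm:matching} immediately gives
$$
\xi^c(T) \geq \xi^c(A_{n, n-\beta}) = \xi^c(A_{n, n/2}),
$$
with equality if and only if $T \cong A_{n, n/2}$.

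It then remains only to evaluate $\xi^c(A_{n, n/2})$. For this I would invoke the spur formula \eqref{eq:spur}, but first determine which of its three cases applies when $m = n/2$. The relevant branch is the generic one, $7n - 2m - 7$, which is valid for $\frac{n-1}{2} < m < n - 2$. Verifying the two inequalities with $m = n/2$: the left one, $\frac{n-1}{2} < n/2$, is equivalent to $n - 1 < n$ and therefore always holds, while the right one, $n/2 < n - 2$, is equivalent to $n > 4$ and is guaranteed by $n \geq 6$. Substituting $m = n/2$ then yields $\xi^c(A_{n, n/2}) = 7n - 2 \cdot (n/2) - 7 = 6n - 7$, which completes the argument.

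The computation itself is entirely routine, so there is no real difficulty in the corollary; the single point requiring care is the boundary check that $m = n/2$ lands strictly inside the interval $(\frac{n-1}{2},\, n-2)$ rather than on one of its endpoints. This is exactly where the hypothesis $\beta > 2$ does its work: for $n = 4$ one would instead have $\beta = 2$ (outside the scope of Theorem~\ref{thm:matching}) and $m = n/2 = n - 2$ would fall into the middle case of \eqref{eq:spur}, giving $5n - 6$ rather than $6n - 7$. I would therefore make the reduction $\beta > 2 \Rightarrow n \geq 6$ explicit, since it is precisely what keeps the evaluation in the correct branch of the spur formula.
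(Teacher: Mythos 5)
Your proof is correct and follows the same route as the paper, which derives this corollary directly from Theorem~\ref{thm:matching} with $\beta = n/2$ and evaluates $\xi^c(A_{n,n/2}) = 6n-7$ via \eqref{eq:spur}. Your explicit check that $\beta > 2$ forces $n \geq 6$, placing $m = n/2$ strictly inside the interval $\left(\frac{n-1}{2},\, n-2\right)$ of the generic branch of the spur formula, is a worthwhile detail the paper leaves implicit.
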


\begin{thm}\label{9}
Let $T$ be a tree on $n$ vertices with independence number $\alpha$\,. Then
$$
\xi^c(T)\geq \xi^c(A_{n, \alpha}),
$$
with equality holding if and only if $T \cong A_{n,\alpha}$ for $\alpha \neq n - 2$ and $T \cong
S_{a,b}$ for $\alpha = n - 2$\,.
\end{thm}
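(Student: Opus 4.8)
The plan is to reduce the statement to the matching-number result (Theorem~\ref{thm:matching}) via the Gallai--K\"onig identity. A tree is bipartite and, being connected on $n\geq 2$ vertices, has no isolated vertices, so K\"onig's theorem and Gallai's identity combine to give $\alpha(T)+\beta(T)=n$, that is $\alpha(T)=n-\beta(T)$. Consequently $A_{n,\alpha}=A_{n,n-\beta}$, and for every tree with $\beta>2$, i.e. $\alpha<n-2$, Theorem~\ref{thm:matching} immediately yields $\xi^c(T)\geq \xi^c(A_{n,n-\beta})=\xi^c(A_{n,\alpha})$ with equality if and only if $T\cong A_{n,\alpha}$. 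This disposes of all $\alpha<n-2$ in one step, so the remaining work is confined to the two boundary values $\alpha=n-1$ and $\alpha=n-2$.

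The case $\alpha=n-1$ corresponds to $\beta=1$, and the only tree with matching number one is the star $S_n=A_{n,n-1}$; equality is then automatic and the extremal tree is unique, consistent with the ``$\alpha\neq n-2$'' branch of the statement. The genuinely delicate case is $\alpha=n-2$, i.e. $\beta=2$, which is precisely where Theorem~\ref{thm:matching} is unavailable and where the extremal tree ceases to be unique. Here I would first classify all trees with matching number two. By K\"onig's theorem such a tree has a vertex cover $\{x,y\}$ of size two, so every edge is incident with $x$ or $y$; since $T$ is a tree, the unique $x$--$y$ path then has length one or two, because a path of length at least three would contain an edge covered by neither $x$ nor $y$.

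If $x$ and $y$ are adjacent, $T$ is a double star $S_{a,b}$ with $a+b=n$, and by the remark preceding the theorem together with \eqref{eq:spur} we have $\xi^c(S_{a,b})=5n-6=\xi^c(A_{n,n-2})$. Otherwise $x$ and $y$ have a unique common neighbour $z$ of degree two, and all remaining vertices are pendent at $x$ or at $y$; should one of $x,y$ carry no pendent neighbour, the cover can be shifted to an adjacent pair and $T$ is again a double star, so the only new trees are the ``subdivided double stars'' in which $x$ and $y$ each have at least one leaf.

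The main obstacle, and the one computational point, is to show these subdivided double stars are strictly non-extremal. I would compute $\xi^c$ directly: the two central vertices $x,y$ have eccentricity $3$, the middle vertex $z$ has eccentricity $2$, and every leaf has eccentricity $4$; summing $\deg(v)\cdot\varepsilon(v)$ over the $a$ leaves at $x$, the $b$ leaves at $y$, and the three internal vertices gives $\xi^c=7(a+b)+10=7n-11$, using $n=a+b+3$. Since $7n-11>5n-6$ for all $n\geq 3$, the minimum value $5n-6$ over trees with $\alpha=n-2$ is attained exactly by the double stars. This establishes both the inequality $\xi^c(T)\geq \xi^c(A_{n,n-2})$ and the equality characterization $T\cong S_{a,b}$ in the boundary case, completing the proof.
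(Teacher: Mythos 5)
Your proposal is correct, but it follows a genuinely different route from the paper. The paper argues directly: the pendent vertices of $T$ form an independent set, so their number $k$ satisfies $k\leq\alpha$; Theorem~\ref{thm:pendent} then gives $\xi^c(T)\geq\xi^c(SB_{n,k})$, and the chain \eqref{eq:order-balanced} together with $\alpha\geq\lceil n/2\rceil$ (bipartiteness) yields $\xi^c(SB_{n,k})\geq\xi^c(SB_{n,\alpha})=\xi^c(A_{n,\alpha})$, with the double-star exception at $\alpha=n-2$ traced back to the non-uniqueness in Theorem~\ref{thm:pendent} when $n\equiv 2\pmod p$. You instead invoke the K\"onig--Gallai duality $\alpha(T)+\beta(T)=n$, which the paper never uses, to reduce everything with $\alpha<n-2$ to Theorem~\ref{thm:matching} in one line, and then you dispose of the boundary cases $\beta=1$ and $\beta=2$ by hand. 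The trade-off: the paper's argument is uniform in $\alpha$ and shorter on the page, but it leaves the equality characterization at $\alpha=n-2$ implicit; your classification of trees with $\beta=2$ (double stars plus the ``subdivided double stars'' $x$--$z$--$y$ with leaves at both $x$ and $y$) makes that case airtight, since you verify $\xi^c=7n-11>5n-6$ for the subdivided ones. In fact this sharpens the paper's own side remark preceding Theorem~\ref{thm:matching}, which asserts that the trees with $\beta=2$ are exactly the double stars --- that is not quite true (e.g.\ $P_5$ has $\beta=2$), and your computation shows precisely why these extra trees do not disturb the equality statement. One caveat worth noting: since the paper proves Theorem~\ref{thm:matching} itself via Theorem~\ref{thm:pendent} and \eqref{eq:order-balanced}, your proof ultimately rests on the same underlying machinery, so it is a reorganization rather than an independent argument; there is no circularity, however, because Theorem~\ref{thm:matching} nowhere uses the independence-number result.
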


\begin{proof}
Since all the pendent vertices form an independent set, it follows $k\leq \alpha$\,. Every tree is
a bipartite graph, and we get $\alpha \geq \lceil\frac{n}{2}\rceil$\,. By Theorem
\ref{thm:pendent}, we have $\xi^c (T)\geq \xi^c(BS_{n,k})$ and
$$
\xi^c(BS_{n,k})\geq \xi^c(BS_{n,\alpha})=\xi^c(A_{n,\alpha})\,. $$
Therefore, $\xi^c(T)\geq
\xi^c(A_{n,\alpha})$\,, with equality holding if and only if $T\cong A_{n,\alpha}$ for $\alpha \neq
n - 2$ and $T \cong S_{a,b}$ for $\alpha = n - 2$\,.
\end{proof}

\subsection{Trees with given diameter or radius}

The vertices of minimum eccentricity form the center of a graph. A tree has exactly one or two
adjacent center vertices; in this latter case one speaks of a bicenter. In what follows, if a tree
has a bicenter, then our considerations apply to any of its center vertices.

For a tree $T$ with radius $r(T)$\,,
$$
\label{tree_center} d (T) = \left\{
\begin{array}{l l}
  2\,r(T) - 1 & \quad \mbox{if $T$ has a bicenter }\\[3mm]
  2\,r (T) & \quad \mbox{if $T$ has has a center. }\\
\end{array} \right.
$$

Let $C_{n,d}(p_1, p_2, \ldots, p_{d-1})$ be a caterpillar with $n$ vertices obtained from a path
$P_{d+1} = v_0v_1\ldots v_{d-1}v_d$ by attaching $p_i \geq 0$ pendent vertices to $v_i$\,, $1\leq i
\leq d-1$\,, where $n=d+1+\sum_{i=1}^{d-1}p_i$\,. Denote
$$
C_{n,d,i}=C_{n,d}(\underbrace{0, \ldots, 0}_{i-1}, n-d-1, 0, \ldots ,0)\,.
$$
Let $T_{(n, d)}$ be the set of $n$-vertex trees obtained from
path $P_{d + 1} = v_0 v_1 \ldots v_d$ by attaching $n - d - 1$ pendent vertices to $v_{\lfloor d/2
\rfloor}$ and/or $v_{\lceil d/2 \rceil}$\,, where $2 \leq d \leq n - 2$\,. Note that there is only
one tree in the set $T_{(n,d)}$ for even $d$\,, and $\lfloor (n - d + 1)/2 \rfloor$ trees for odd
$d$\,. Zhou and Du in \cite{ZhDu09} proved the following
\begin{thm}
Let $T$ be an arbitrary tree on $n$ vertices and diameter $d$\,. Then
$$
\xi^c (T) \geq \xi (T^*)\,, \quad T^* \in  T_{(n, d)}
$$
with equality if and only if $T \in T_{(n, d)}$\,.
\end{thm}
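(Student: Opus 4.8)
The plan is to start from an extremal tree $T$ attaining the minimum of $\xi^c$ among $n$-vertex trees of diameter $d$ and to push, by weight-decreasing moves, every off-path vertex to a central vertex until $T$ is forced into $T_{(n,d)}$. First I would fix a diametral path $P = v_0 v_1 \ldots v_d$. Since any neighbour of $v_0$ (or $v_d$) lying off $P$ would create a path of length $d+1$, the endpoints $v_0,v_d$ must be leaves, so every vertex outside $P$ hangs in a subtree rooted at an internal vertex $v_i$, $1\le i\le d-1$. The centre of $T$ is the middle of $P$, namely $c=v_{\lfloor d/2\rfloor}$ (together with $v_{\lceil d/2\rceil}$ in the bicentral case), and has eccentricity equal to the radius $r=\lceil d/2\rceil$.

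The engine of the argument is the standard fact that in a tree a farthest vertex from any $x$ is an endpoint of a longest path, so $\varepsilon(x)=\max\{d(x,v_0),d(x,v_d)\}$ for every vertex $x$. Consequently, if I detach a pendent vertex $w$ from its neighbour $v$ and reattach it to the central vertex $c$, then all distances to $v_0$ and $v_d$ of vertices other than $w$ are unchanged (a leaf lies on no shortest path to $v_0$ or $v_d$), so every eccentricity except that of $w$ is preserved and only $\deg(v)$ and $\deg(c)$ change. A direct computation, using $\varepsilon(v_i)=\max\{i,d-i\}$ and $\varepsilon(c)=r$, then yields $\xi^c(T)-\xi^c(T')=2(\varepsilon(v)-r)\ge 0$, with strict inequality exactly when $v$ is not a centre. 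Moreover the move keeps the diameter equal to $d$: the path $P$ is untouched so the diameter is at least $d$, while the relocated vertex reaches distance at most $1+r\le d$ from the rest, so no longer path is created. This is the same mechanism as in Theorems~\ref{thm-rot} and~\ref{thm-delta}, adapted so that the diametral path is protected and the target of every rotation is a centre.

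The main step is then an extremality/induction argument: as long as some vertex off $P$ is not a pendent vertex attached at a centre, pick a deepest off-path pendent vertex $w$; its neighbour $v$ is then not a centre, so relocating $w$ to $c$ strictly decreases $\xi^c$, contradicting the minimality of $T$ unless no such $w$ exists. Hence in the extremal tree every off-path vertex is a leaf adjacent to a central vertex, which is exactly the description of $T_{(n,d)}$. For the equality statement I would verify that all members of $T_{(n,d)}$ share the same value: for even $d$ the set is a singleton, while for odd $d$ both centres $c,c'$ have eccentricity $r$ and any leaf attached at either has eccentricity $r+1$, so transferring a leaf between $c$ and $c'$ leaves the leaf's contribution untouched and merely swaps a unit of degree between two vertices of equal eccentricity $r$ — the neutral ($\varepsilon(v)=r$) case of the computation above — leaving $\xi^c$ unchanged. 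Thus every tree in $T_{(n,d)}$ is a minimiser and equality holds iff $T\in T_{(n,d)}$.

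I expect the delicate points to be twofold. First, one must argue carefully that no move ever drops the diameter below $d$ or raises it above $d$; this rests on keeping $P$ intact and on the bound $1+r\le d$. Second, the clean identity $\xi^c(T)-\xi^c(T')=2(\varepsilon(v)-r)$ depends on the lemma that every eccentricity is realised at $v_0$ or $v_d$, combined with the observation that relocating a leaf disturbs no other vertex's distance to the path endpoints; making this fully rigorous while a deep off-path subtree is being dismantled is the part requiring care. The bicentral case, where several distinct trees are simultaneously extremal, is what makes the characterisation of equality nontrivial.
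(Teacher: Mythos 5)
Your proof is correct, but it takes a genuinely different route from the paper's. The paper derives the theorem from its general transformations: Corollary \ref{cor:pisigma} first forces every subtree hanging off the diametral path to be a star, so the extremal tree is a caterpillar $C_{n,d}(p_1,\ldots,p_{d-1})$, and then the $\delta$-transformation of Theorem \ref{thm-delta} slides each bundle of pendent vertices one step toward the center, strictly decreasing $\xi^c$ until all pendent vertices sit at $v_{\lfloor d/2\rfloor}$ and/or $v_{\lceil d/2\rceil}$ (where the $\delta$-move can no longer be applied). You instead use a single, more atomic move --- relocating one off-path leaf directly to a center --- justified by the lemma that in a tree every eccentricity is attained at an end of a fixed diametral path, which yields the clean identity $\xi^c(T)-\xi^c(T')=2(\varepsilon(v)-r)$. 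Your route buys three things: it is self-contained (no reliance on the Section 3 machinery), it makes diameter-preservation explicit (the paper never verifies that its transformations keep the diameter equal to $d$, though they do), and the identity makes the equality analysis transparent, since moves are neutral exactly when the leaf hangs at a center --- this simultaneously identifies $T_{(n,d)}$ and explains why all its members (the bicentral ties for odd $d$) share the same value, a point the paper leaves implicit. The paper's route buys brevity and reuse: the same two transformations also settle the pendent-vertex, matching, independence and unicyclic cases elsewhere. One small patch your write-up needs: when every off-path vertex has depth one, ``pick a deepest off-path pendent vertex'' does not by itself select one with a non-central neighbour; you should instead pick an off-path leaf whose neighbour is not a centre, and such a leaf always exists when $T\notin T_{(n,d)}$ (either some off-path vertex is not a leaf, in which case the deepest leaf below it has an off-path, hence non-central, neighbour; or all off-path vertices are leaves on $P$ and one of them hangs at a non-central $v_i$). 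This is a phrasing fix, not a gap.
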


Here we present an alternative proof of this theorem.

\begin{proof}
Let $T$ be $n$-vertex tree with diameter $d$ having minimal eccentric connectivity index. Let
$P_{d+1}=v_0v_1\ldots v_{d-1}v_d$ be a path of length $d$\,. By Corollary \ref{cor:pisigma}, all
trees attached to the path $P_{d+1}$ must be stars, which implies that $T \cong C_{n,d}(p_1, p_2,
\ldots , p_{d-1})$\,. Let $i$ be the minimal index such that $p_i > 0$ and let $j$ be the maximal
index such that $p_j > 0$\,. Without loss of generality assume that $i \leq d - j$\,. By applying
the transformation from Theorem \ref{thm-delta} at the vertices $v_i$ in $C_{n,d}(p_1, p_2, \ldots,
p_{d-1})$\,, we get that exactly trees from $T_{(n, d)}$ have minimal eccentric connectivity index.
Note that we cannot apply $\delta$ transformation if the vertex $v$ is a central vertex, which is
exactly the case for trees in $T_{(n, d)}$\,. This completes the proof.
\end{proof}

It can be easily computed that
$$
\xi^c (C_{n,d, \lfloor d / 2\rfloor}) = \left \lfloor \frac{3d^2+1}{2} \right \rfloor + (n - d - 1)
\left (1 + 2 \left \lceil \frac{d}{2} \right \rceil \right ).
$$

Using transformation from Theorem \ref{thm-pi} applied to a center vertex, it follows that $\xi^c
(T') < \xi^c (T'')$ for $T' \in T_{(n, 2r-1)}$ and $T'' \in T_{(n, 2r)}$\,.

\begin{cor}
Let $T$ be an arbitrary tree on $n$ vertices with radius $r$\,. Then
$$
\xi^c (T) \geq \xi^c (T_{(n, 2r-1)})\,,
$$
with equality if and only if $T \in T_{(n, 2r-1)}$\,.
\end{cor}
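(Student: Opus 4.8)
The plan is to reduce everything to the characterization of the minimizers among trees of \emph{fixed diameter} (the theorem of Zhou and Du stated just above), exploiting the rigid relation between radius and diameter of a tree. Recall that a tree with radius $r$ has either a single center, in which case its diameter equals $2r$, or a bicenter, in which case its diameter equals $2r-1$; these are the only two possibilities. Thus every $n$-vertex tree $T$ of radius $r$ falls into exactly one of the two diameter classes $d = 2r-1$ or $d = 2r$, and I would split the argument accordingly.

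First I would treat the bicentered case $d(T) = 2r-1$. Here the diameter theorem applies directly with $d = 2r-1$ and gives $\xi^c(T) \geq \xi^c(T^*)$ for any $T^* \in T_{(n,2r-1)}$, with equality precisely when $T \in T_{(n,2r-1)}$. This already yields the desired inequality and its equality case for all bicentered trees. Next I would treat the centered case $d(T) = 2r$. Applying the diameter theorem with $d = 2r$ gives $\xi^c(T) \geq \xi^c(T^{**})$ for $T^{**} \in T_{(n,2r)}$. To connect this back to $T_{(n,2r-1)}$, I would invoke the strict inequality recorded immediately before the corollary, namely $\xi^c(T_{(n,2r-1)}) < \xi^c(T_{(n,2r)})$, which itself follows from Theorem \ref{thm-pi} applied at a center vertex (lengthening one of the two balanced branches by one converts a member of $T_{(n,2r-1)}$ into a member of $T_{(n,2r)}$ and strictly increases $\xi^c$). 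Combining, $\xi^c(T) \geq \xi^c(T_{(n,2r)}) > \xi^c(T_{(n,2r-1)})$, so centered trees always lie strictly above the target value and can never attain equality.

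Putting the two cases together proves $\xi^c(T) \geq \xi^c(T_{(n,2r-1)})$ for every radius-$r$ tree, with equality if and only if $T \in T_{(n,2r-1)}$ (which in particular forces $T$ to be bicentered). The step I expect to require the most care is not any single estimate — both cases reduce to results already established — but rather the bookkeeping of the equality analysis: I must check that the centered family is genuinely separated from $T_{(n,2r-1)}$ by a strict gap, and verify the boundary ranges of $r$ so that both $T_{(n,2r-1)}$ and $T_{(n,2r)}$ are nonempty and the constraint $2 \leq d \leq n-2$ of the diameter theorem is met. In particular the small-radius case $r=1$, where $T$ is the star $S_n$ and $d \geq 2$ cannot be ensured, should be excluded or handled separately, since it is already covered by the earlier result that $S_n$ minimizes $\xi^c$ among all $n$-vertex trees.
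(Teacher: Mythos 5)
Your proposal is correct and takes essentially the same route as the paper: the corollary there is obtained by combining the Zhou--Du diameter theorem with the dichotomy $d(T) = 2r-1$ (bicenter) or $d(T) = 2r$ (center), and the strict inequality $\xi^c(T') < \xi^c(T'')$ for $T' \in T_{(n,2r-1)}$, $T'' \in T_{(n,2r)}$ stated immediately before the corollary (via Theorem \ref{thm-pi} at a center vertex) disposes of the centered case exactly as you do. Your extra care about the degenerate range of $r$ (e.g.\ $r=1$, where $T \cong S_n$) is a sensible refinement that the paper leaves implicit.
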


In addition, the following chain of inequalities hold
$$
\xi^c (P_{n}) = \xi^c(C_{n,n-1,\lfloor (n-1)/2 \rfloor}) > \xi^c(C_{n,n-2,\lfloor (n-2)/2 \rfloor})
> \ldots > \xi^c(C_{n,3,1}) > \xi^c(C_{n,2,1}) = \xi^c (S_{n}).
$$

It follows that $n$-vertex double stars have the second maximum eccentric connectivity index among
trees on $n$ vertices.

\section{Eccentric connectivity index of unicyclic graphs}

In this section we present the results from \cite{Il10} concerning the extremal unicyclic graphs
with $n$ vertices and given girth $k$ (see Fig. 5).

Let ${\cal{U}}_{n,k}$ be the set of all unicyclic graphs of order $n \geq 3$ with girth $k\geq
3$\,. By $L_{n,k}$ we denote the graph obtained from $C_{k}$ and $P_{n-k+1}$ by identifying a
vertex of $C_{k}$ with one end vertex of $P_{n-k+1}$\,. We denote by $H_{n,k}$ the graph obtained
from $C_{k}$ by adding $n-k$ pendent vertices to a vertex of $C_{k}$\,. For $U \in
{\cal{U}}_{n,k}$\,, if $k=n$ then $U \cong C_{k}$\,; if $k=n-1$ then $U \cong L_{n,n-1}$\,. So in
the following, we assume that $3\leq k \leq n-2$\,.  Ili\' c and Stevanovi\' c in \cite{StIl09}
proved that among all connected unicyclic graphs $C_n$ maximizes, while $H_{n,3}$ minimizes all
Laplacian coefficients.

\begin{figure}[h]
  \center
  \includegraphics [width = 12cm]{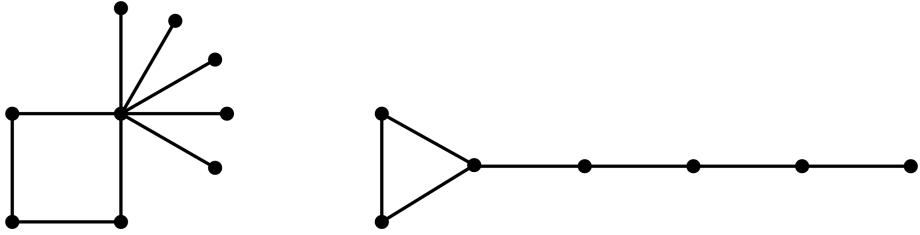}
  \caption { \textit{ Extremal unicyclic graphs $H_{9,4}$ and $L_{7,3}$\,.} }
\end{figure}

\subsection{The minimum case}

\begin{thm}
Let $G$ be a connected unicyclic graphs on $n \geq 5$ vertices with girth $k$\,. Then
$$
\xi^c (G) \geq \xi^c (H_{n,k})\,,
$$
with equality if and only if $G \cong H_{n,k}$\,.
\end{thm}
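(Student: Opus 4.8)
The plan is to first collapse any minimizer into the family consisting of a cycle $C_k$ with pendant vertices hung directly on cycle vertices, and then to locate $H_{n,k}$ inside this family by an exact evaluation of $\xi^c$. Write a minimizer $G$ as the cycle $C_k$ with vertices $u_0,\dots,u_{k-1}$ together with rooted trees attached at these vertices; since $C_k$ is the unique shortest cycle it is an isometric subgraph, so $d(u_i,u_j)$ equals the cycle distance. I would invoke Corollary~\ref{cor:pisigma} in the form used for the fixed-diameter theorem: among all trees of a given order attached at a fixed vertex, attaching a star (i.e.\ pendant vertices at that vertex) gives the smallest $\xi^c$. Replacing each attached tree by a star of equal order therefore never increases $\xi^c$, and strictly decreases it unless the tree is already a star. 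Hence it suffices to minimize over graphs $G$ that are $C_k$ carrying $p_i\ge 0$ pendant vertices at $u_i$ with $\sum_i p_i=n-k$.

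For such $G$ I would write $\xi^c$ in closed form. A pendant at $u_i$ has eccentricity $1+\varepsilon(u_i)$ and $\deg(u_i)=2+p_i$, so $\xi^c=(n-k)+2\sum_i(1+p_i)\,\varepsilon(u_i)$. The crucial simplification is that $\varepsilon(u_i)$ takes only two values: the farthest vertex from $u_i$ is either a cycle vertex at distance $\lfloor k/2\rfloor$ or a pendant attached to such a vertex at distance $\lfloor k/2\rfloor+1$, so $\varepsilon(u_i)=\lfloor k/2\rfloor+\eta_i$ with $\eta_i\in\{0,1\}$, where $\eta_i=1$ exactly when some vertex at cycle-distance $\lfloor k/2\rfloor$ from $u_i$ carries a pendant. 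Using $\sum_i(1+p_i)=n$, this gives $\xi^c=(n-k)+2n\lfloor k/2\rfloor+2\sum_i(1+p_i)\eta_i$, where only the final sum depends on the distribution $(p_i)$.

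It remains to minimize $S:=\sum_{i:\eta_i=1}(1+p_i)$ subject to $\sum_i p_i=n-k\ge 1$. Put $A=\{i:p_i\ge 1\}\neq\emptyset$; then $\{i:\eta_i=1\}$ is precisely the set $B$ of vertices antipodal (at cycle-distance $\lfloor k/2\rfloor$) to $A$, whence $S\ge|B|$. For even $k$ the antipodal map is a fixed-point-free involution, so $|B|=|A|\ge 1$; for odd $k$ each vertex has two distinct antipodes and a short count gives $|B|\ge 2$, with equality in either case forcing $|A|=1$. When $|A|=1$ the single pendant-bearing vertex lies outside its own antipodal set, so $\sum_{i\in B}p_i=0$ and $S$ attains its minimum ($1$ for even $k$, $2$ for odd $k$). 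Thus $S$, and hence $\xi^c$, is minimized exactly when all $n-k$ pendants sit at one cycle vertex, i.e.\ $G\cong H_{n,k}$, every other distribution being strictly larger.

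The eccentricity bookkeeping and the closed form are routine; the real content, and the step I expect to be the main obstacle, is the minimization of $S$ with its uniqueness claim — in particular checking that enlarging $A$ beyond one vertex strictly enlarges its antipode set $B$, separately for even and odd girth. Finally I would treat the degenerate ranges directly: for $k=n$ the only graph is $C_n=H_{n,n}$ and for $k=n-1$ it is $L_{n,n-1}=H_{n,n-1}$, so the argument is needed only for $3\le k\le n-2$, where $n-k\ge 2$ ensures $A\neq\emptyset$.
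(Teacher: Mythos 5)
Your proof is correct and takes essentially the same route as the paper: reduction via Corollary \ref{cor:pisigma} to a cycle with pendant vertices attached directly at cycle vertices, the same closed form $\xi^c = (n-k) + 2\sum_i (1+p_i)\,\varepsilon(u_i)$ with the two-valued eccentricities $\lfloor k/2\rfloor$ or $\lfloor k/2\rfloor+1$, and then minimization over the pendant distribution. The only difference is that your antipodal-set argument for minimizing $S$ (with the even/odd case analysis and the uniqueness claim) rigorously spells out what the paper's proof merely asserts in its final sentence, so it is a sharpening of the same approach rather than a different one.
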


\begin{proof}
Using the transformation from Corollary \ref{cor:pisigma}, we can deduce that among $n$-vertex
unicyclic graphs with given girth $k$\,, the minimum of $\xi^c (G)$ is achieved exactly for graphs
$H (n, k; n_1, n_2, \ldots, n_k)$\,, which are obtained from a cycle $C = v_1v_2\ldots v_k$ with
$n_i$ pendent vertices attached at $v_i$ $(i = 1, 2, \ldots, k)$\,. Furthermore, it holds $n - k =
\sum_{i = 1}^k n_i$\,.

The eccentricity of the cycle vertices $v_i$ are exactly $\lfloor k/2 \rfloor$ or $\lfloor k / 2
\rfloor + 1$\,, depending whether the opposite vertex on the cycle has attached pendent vertices.
It follows that
\begin{eqnarray*}
\xi^c (G) &=& \sum_{i = 1}^k (n_i + 2) \cdot \varepsilon (v_i) + \sum_{i = 1}^k n_i \cdot
(\varepsilon (v_i) + 1) \\
&=& n - k + 2 \sum_{i = 1}^k (n_i + 1) \cdot \varepsilon (v_i)\,.
\end{eqnarray*}

For even $k$\,, there exist at least one vertex $v$ with $\varepsilon (v) = \lfloor k / 2 \rfloor +
1$; while for odd $k$ there are at least two vertices $v$ with $\varepsilon (v) = \lfloor k / 2
\rfloor + 1$\,. Finally, the eccentric connectivity index of $G$ is minimal exactly for $H_{n,k} =
H (n, k; n - k, 0, 0, \ldots, 0)$\,, by collecting all pendent vertices at one vertex from the
cycle $C$ (all but one or two vertices from the cycle have eccentricity $\lfloor k / 2 \rfloor$).
\end{proof}

For even $3 \leq k < n$ we have
$$
\xi^c (H_{n, k}) = n - k + 2 \left ( \frac{k}{2} + 1 + (n - k + 1) \frac{k}{2} + (k - 2)
\frac{k}{2} \right) = nk + n - k + 2\,,
$$
while for odd $3 \leq k < n$ we have
$$
\xi^c (H_{n, k}) = n - k + 2 \left ( 2 \cdot \frac{k + 1}{2} + (n - k + 1) \frac{k - 1}{2} + (k -
3) \frac{k - 1}{2} \right) = nk - k + 4.
$$

\subsection{The maximum case}

Using the transformations from Corollary \ref{cor:pisigma}, we can deduce that among $n$-vertex
unicyclic graphs with given girth, the maximum of $\xi^c (G)$ is achieved exactly for graphs $L (n,
k; n_1, n_2, \ldots, n_k)$\,, which are obtained from a cycle $C = v_1v_2\ldots v_k$ with paths
$P_i$ of length $n_i$ attached at $v_i$ $(i = 1, 2, \ldots, k)$\,. Furthermore, it holds $n - k =
\sum_{i = 1}^k n_i$\,.

Let $w_i$ denote the end vertex of $P_i$ (if $n_i = 0$ we have $w_i \equiv v_i$). Let the
eccentricity of $v_i$ in the graph $G \setminus P_i$\,, obtained by removing the vertices of the
pendent path $P_i$\,, be $m_i$\,. Only vertices that do not have degree equal to $2$ are $v_i$ and
$w_i$\,.
\begin{eqnarray*}
\xi^c (G) &=& 2 \sum_{v \in V (G)} \varepsilon (v) + \sum_{i = 1}^k \left (\varepsilon (v_i) -
\varepsilon (w_i) \right )\,.
\end{eqnarray*}

Notice that above formula is valid also for $v_i \equiv w_i$\,. It holds $\varepsilon (v_i) -
\varepsilon (w_i) = \max (m_i, n_i) - (n_i + m_i) = -\min (n_i, m_i)$\,, and finally
$$
\xi^c (G) = 2 \sum_{v \in V (G)} \varepsilon (v) - \sum_{i = 1}^k \min (m_i, n_i)\,.
$$

{\bf Claim 1. } $\min (n - k, \lfloor \frac{k}{2} \rfloor) \leq \sum_{i = 1}^k \min (m_i, n_i)$\,.

If $n - k \leq \lfloor k/2 \rfloor$\,, it follows $n_i \leq m_i$\,, and $n - k \leq \sum_{i = 1}^k
n_i$\,. If $\lfloor k/2 \rfloor < n - k$\,, using $m_i \geq \lfloor k/2 \rfloor$ it follows $n_i
\leq m_i$ for $i = 1, 2, \ldots, k$\,, and finally $\lfloor k/2 \rfloor \leq \sum_{i = 1}^k n_i = n
- k$\,. \hfill \QED

\medskip

We will prove that $\zeta (G) = \sum_{v \in V (G)} \varepsilon (v)$ is maximized exactly for $L_{n,
k}$\,. Let $k_i$ denote the number of vertices such that the vertex $w_i$ is the furthest for those
vertices.

\medskip

{\bf Claim 2. } For at most one pendent path $P_i$ with length $n_i > 0$\,, there are vertices from
$P_i$ different from $v_i$\,, such that $w_i$ is the furthest vertex in $G$\,.

Assume that there are two vertices $x \in P_i$ and $y \in P_j$\,, such that $ecc (x) = d (x, w_i)$
and $ecc (y) = d (y, w_j)$\,. It follows that $d (x, w_i) \geq d (x, y) + d (y, w_j) > d (y, w_j)$
and $d (y, w_j) \geq d (y, x) + d (x, w_i) > d (x, w_i)$\,, which is a contradiction. \hfill \QED

\medskip

We will refer to this path (if exists) as 'ecc path'. Assume that $P_i$ and $P_j$ are not 'ecc
paths'. By rotating the path $P_j$ on the path $P_i$\,, we mean the following transformation
$$
G = L (n, k; n_1, n_2, \ldots, n_i, \ldots, n_j, \ldots, n_k) \rightarrow L (n, k; n_1, n_2,
\ldots, n_i + n_j, \ldots, 0, \ldots, n_k) = G'\,.
$$

For at least $k_i$ vertices $v$\,, the eccentricity $\varepsilon (v)$ is increased by $n_j$\,,
while for at most $k_j$ vertices $v$\,, the eccentricity $\varepsilon (v)$ is decreased by $n_j$\,.
The eccentricity of vertices from $P_j$ is changed by $n_i + m_i - m_j$\,. Finally, we get
$$
\zeta (G') - \zeta (G) = n_j (k_i - k_j) + n_j (n_i + m_i - m_j) = n_j (k_i + m_i - k_j - m_j) +
n_i n_j\,.
$$

Without loss of generality, assume that $k_i + m_i \geq k_j + m_j$\,, and it follows $\zeta (G') >
\zeta (G)$\,.

By applying this argument, we increase in each step the total eccentricity and finally get either
$L_{n,k}$ or a unicyclic graph with two pendent paths (the longer one being 'ecc path'). By
rotating the shorter path on the longer path, it can be easily shown that $\zeta (L_{n,k}) > \zeta
(G)$\,.

Finally, we get the following
\begin{thm}
Let $G$ be a connected unicyclic graph on $n \geq 5$ vertices with girth $k$\,. Then
$$
\xi^c (G) \leq \xi^c (L_{n,k})\,,
$$
with equality if and only if $G \cong L_{n,k}$.
\end{thm}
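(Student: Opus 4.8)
The plan is to assemble the statement from the structural results already established for graphs of the form $L(n, k; n_1, n_2, \ldots, n_k)$. First I would invoke Corollary \ref{cor:pisigma}: since attaching a path at a vertex yields a strictly larger value of $\xi^c$ than attaching any other tree that is neither a path nor a star, an extremal graph maximizing $\xi^c$ among unicyclic graphs of girth $k$ must have all of its off-cycle structure consisting of pendent paths, i.e. $G = L(n, k; n_1, \ldots, n_k)$ with $n - k = \sum_{i=1}^k n_i$. This reduces the problem to optimizing over the length vectors $(n_1, \ldots, n_k)$.

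The central identity is the formula derived above,
$$
\xi^c(G) = 2 \sum_{v \in V(G)} \varepsilon(v) - \sum_{i=1}^k \min(m_i, n_i) = 2\,\zeta(G) - \sum_{i=1}^k \min(m_i, n_i).
$$
Maximizing $\xi^c$ therefore amounts to maximizing the total eccentricity $\zeta(G)$ while simultaneously minimizing the correction term $\sum_{i=1}^k \min(m_i, n_i)$. I would first control the second term via Claim 1, which gives the uniform lower bound $\sum_{i=1}^k \min(m_i, n_i) \geq \min(n - k, \lfloor k/2 \rfloor)$. The crucial observation is that $L_{n,k}$ attains this bound with equality: it carries a single pendent path of length $n - k$ at one cycle vertex, whose escape eccentricity (the eccentricity of the attachment vertex in the residual cycle $C_k$) is $m_1 = \lfloor k/2 \rfloor$, so the sum collapses to exactly $\min(\lfloor k/2 \rfloor, n - k)$.

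For the first term I would use the rotation argument established above: rotating a shorter non-'ecc' path onto a longer path strictly increases $\zeta$, since $\zeta(G') - \zeta(G) = n_j(k_i + m_i - k_j - m_j) + n_i n_j > 0$ after ordering the indices so that $k_i + m_i \geq k_j + m_j$, and Claim 2 guarantees that at most one 'ecc path' can obstruct this process. Iterating the transformation collapses all pendent length onto a single path, so $\zeta(G) \leq \zeta(L_{n,k})$ with equality exactly when $G \cong L_{n,k}$. Combining the two estimates yields
$$
\xi^c(G) = 2\,\zeta(G) - \sum_{i=1}^k \min(m_i, n_i) \leq 2\,\zeta(L_{n,k}) - \min(n - k, \lfloor k/2 \rfloor) = \xi^c(L_{n,k}).
$$

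The delicate point, and the one I would check most carefully, is that the two optimizations do not conflict: because $\xi^c$ is a \emph{difference} of the two terms, in principle a configuration could trade a smaller $\zeta$ for a smaller correction term and still come out ahead. What rescues the argument is that $L_{n,k}$ is simultaneously the unique maximizer of $\zeta$ and a minimizer of $\sum \min(m_i, n_i)$ meeting the Claim 1 bound, so no such trade-off can beat it. For the equality case I would exploit exactly this: strict inequality already holds in the $\zeta$-term for every $G \not\cong L_{n,k}$ of the reduced form, which forces $\xi^c(G) < \xi^c(L_{n,k})$ irrespective of the correction term, thereby securing uniqueness.
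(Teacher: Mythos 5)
Your proposal is correct and follows essentially the same route as the paper: reduction to $L(n,k;n_1,\ldots,n_k)$ via Corollary \ref{cor:pisigma}, the identity $\xi^c(G) = 2\,\zeta(G) - \sum_{i=1}^k \min(m_i,n_i)$, Claim 1 to pin the correction term, and the rotation argument (with Claim 2) to show $\zeta$ is uniquely maximized by $L_{n,k}$. Your explicit handling of the potential trade-off between the two terms, and of the equality case via strict increase of $\zeta$, is exactly the combination step the paper leaves implicit with its closing ``Finally, we get the following.''
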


\section{Composite graphs}

Many interesting graphs are composed of simpler graphs via several operations (also known as graph
products). We are interested in the type of relationship that exist between the eccentric
connectivity index of composite graphs and their components.

The {\em Cartesian product} $G_1\cp G_2 \cp \cdots\cp G_k$ of graphs $G_1,G_2,\ldots,G_k$ has the
vertex set $V(G_1)\times V(G_2) \times \ldots\times V(G_k)$\,, two vertices $(u_1,u_2\ldots,u_k)$
and $(v_1,v_2,\ldots,v_k)$ being adjacent if they differ in exactly one position, say in $i$-th,
and $u_iv_i$ is an edge of $G_i$\,. It is well-known (see \cite{ImKl00}) that for $G=G_1\cp G_2 \cp
\cdots\cp G_k$ and vertices $u, v\in G$ we have
$$
d_G(u,v)=\sum_{i=1}^k d_{G_i}(u_i,v_i)\,.
$$

Using the following relations for the eccentricity and the vertex degrees
$$
\varepsilon_{G_1 \cp G_2} (u_1, u_2) = \varepsilon_{G_1} (u_1) + \varepsilon_{G_2} (u_2)
$$
$$
deg_{G_1 \cp G_2} (u_1, u_2) = deg_{G_1} (u_1) + deg_{G_2} (u_2),
$$
we get
\begin{eqnarray*}
\xi^c (G_1 \cp G_2) &=& \sum_{(u_1, u_2) \in V (G_1 \cp G_2)} deg_{G_1 \cp G_2} (u_1, u_2) \cdot
\varepsilon_{G_1 \cp G_2} (u_1, u_2) \\
&=& \sum_{u_2 \in V (G_2)} \sum_{u_1 \in V (G_1)} deg_{G_1} (u_1) \cdot \varepsilon_{G_1} (u_1) +
\sum_{u_1 \in V (G_1)} \sum_{u_2 \in V (G_2)} deg_{G_2} (u_2) \cdot \varepsilon_{G_2} (u_2) \\
&\phantom{=}& + \ \sum_{u_1 \in V (G_1)} deg_{G_1} (u_1) \sum_{u_2 \in V (G_2)} \varepsilon_{G_2}
(u_2) + \sum_{u_2 \in V (G_2)} deg_{G_2} (u_2) \sum_{u_1 \in V (G_1)} \varepsilon_{G_1} (u_1)\\
&=& |G_2| \cdot \xi^c (G_1) + |G_1| \cdot \xi^c (G_2) + 2 \cdot \|G_2 \| \cdot \zeta (G_1) + 2
\cdot \|G_1 \| \cdot \zeta (G_2)\,,
\end{eqnarray*}
where $|G|$ and $\|G\|$ denote the number of vertices and the number of edges of $G$ respectively.

Other graph products, such as disjunction, symmetric difference, composition, sum, join, corona
product were examined in \cite{DoSa09}. The results can be applied to some graphs of chemical
interest.

For the rectangular grid $P_a \cp P_b$\,, it follows
$$
\xi(P_a \cp P_b) = \left\{
\begin{array}{l}
3a^2b + 3ab^2 - 8ab + 3a + 3b - \frac{3a^2}{2} - \frac{3b^2}{2}, \phantom{+0}\quad \qquad \ \ \mbox{if $a$ and $b$ are even};\\
3a^2b + 3ab^2 - 8ab + 3a + 2b - \frac{3a^2}{2} - \frac{3b^2}{2} + \frac{1}{2},\quad \qquad \mbox{if $a$ is odd and $b$ is even};\\
3a^2b + 3ab^2 - 8ab + 2a + 3b - \frac{3a^2}{2} - \frac{3b^2}{2} + \frac{1}{2},\quad \qquad \mbox{if $a$ is even and $b$ is odd};\\
3a^2b + 3ab^2 - 8ab + 2a + 2b - \frac{3a^2}{2} - \frac{3b^2}{2} + 1,\quad \qquad \mbox{if $a$ and
$b$ are odd}.
\end{array}
\right.
$$

A $C_4$ nanotorus is a Cartesian product of two cycles, $C_a \cp C_b$\,,
$$
\xi^c (C_a \cp C_b) = 4ab \left ( \left \lfloor \frac{a}{2} \right \rfloor + \left \lfloor
\frac{b}{2} \right \rfloor \right),
$$
while $C_4$ nanotube is a Cartesian product of path and cycle, $P_a \cp C_b$\,,
$$
\xi(P_a \cp C_b) = \left\{
\begin{array}{l}
3a^2b + 2ab^2 - 4ab - b^2 + 2b,\quad \qquad \mbox{if $a$ and $b$ are even};\\
3a^2b + 2ab^2 - 6ab - b^2 + 3b,\quad \qquad \mbox{if $a$ is even and $b$ is odd};\\
3a^2b + 2ab^2 - 4ab - b^2 + \phantom{3}b,\quad \qquad \mbox{if $a$ is odd and $b$ is even};\\
3a^2b + 2ab^2 - 6ab - b^2 + 2b,\quad \qquad \mbox{if $a$ and $b$ are odd}.
\end{array}
\right.
$$

\section{Linear algorithm for trees}

Here we present a linear algorithm for calculating the eccentric connectivity index of weighted
trees. In \cite{Il10}, the author presented an extension of this algorithm for unicyclic graphs.

Let $T$ be a rooted tree. By $T_r$ denote the subtree rooted at vertex $r \in V$\,, which is a
subgraph induced on vertex $r$ and all of its descendants.

The adjacency matrix $adj$ is used to store the weights of the edges. First, we choose an arbitrary
vertex as the root and start depth first search procedure (see \cite{CoLRS01} for details). For
every vertex $v$\,, we have to find the length of the longest path from $v$ in the subtree rooted
at $v$\,. In each recursive call we maintain two arrays:
\begin{itemize}
\item $ddown [v]$ representing the length of the longest path from the vertex $v$ in the subtree $T_v$\,;
\item $parent [v]$ representing the unique parent of $v$ in the DFS tree (for the root vertex, we have
$parent [v] = -1$).
\end{itemize}

Next, we perform the second function \verb"Eccentricity(root)", Algorithm \ref{alg2}. In the array
$dup [v]$ we maintain the length of the longest upwards path from the vertex $v$ such that the
first vertex on the path is the parent of $v$\,. We can calculate the value $dup [v]$ in the
recursive procedure by examining all children of the vertex $parent [v]$\,. Finally, for every
vertex $v$ we sum $deg [v] \cdot ecc [v]$\,, where $ecc [v] = \max (ddown [v], dup [v])$ and get
$\xi^c (T)$\,.

\begin{procedure}
\label{alg1} 
    \KwIn{The adjacency list of the tree $T$ with the root vertex $root$\,.}
    \KwOut{The arrays $ddown$ and $parent$\,.}
    \BlankLine

    $ddown [v] = 0$\;
    \ForEach{neighbor $u$ of $v$}
    {
        \If{$(parent [u] = -1)$ and $(u \neq root)$}
        {
            $parent [u] = v$\;
            $DFS (u)$\;
            \If{$ddown [v] < adj [v, u] + ddown [u]$}
            {
                $ddown [v] = adj [v, u] + ddown [u]$\;
            }
        }
    }
    \caption{DFS (vertex v)}
\end{procedure}

\begin{procedure}
\label{alg2} 
    \KwIn{The adjacency list of the tree $T$ and the arrays $ddown$ and $parent$\,.}
    \KwOut{The eccentricity of every vertex stored in the array $ecc$\,.}
    \BlankLine


    $dup [v] = 0$\;
    \If{$parent [v] \neq -1$}
    {
        \ForEach{neighbor $u$ of $parent [v]$}
        {
            \If{$(u \neq v)$ \mbox{\bf and} $(u \neq parent [parent [v]])$}
            {
                $dup [v] = \max (dup [v], adj [parent [v], u] + ddown [u])$\;
            }
        }
        $dup [v] = adj [v, parent [v]] + \max (dup [v], dup [parent [v]])$\;
    }

    \ForEach{neighbor $u$ of $v$}
    {
        \If{$parent [v] \neq u$}
        {
            $Eccentricity (u)$\;
        }
    }
    $ecc [v] = \max (ddown [v], dup [v])$\;

    \caption{Eccentricity (vertex v)}
\end{procedure}

The time complexity of the algorithm is linear $O (n)$\,, and the memory used is $O (n)$\,, since
we need three additional arrays of length $n$ while the adjacency list contains $2 (n - 1)$
elements.

\section{Concluding remarks}

In this paper we survey the mathematical properties of eccentric connectivity index. We present the
extremal trees and unicyclic graphs with maximum and minimum eccentric connectivity index subject
to certain constraints (diameter, radius, the number of pendent vertices, matching number,
independence number, maximum vertex degree, girth). Sharp lower and asymptotic upper bounds are
given and various connections with other important graph invariants are established (Zagreb index,
Wiener index, degree distance). We present explicit formulae for the eccentric connectivity index
for several families of composite graphs and designed a linear algorithm for calculating the
eccentric connectivity index of trees.

There are still many interesting open questions for the further study. It would be interesting to
compute the values of $\xi^c (G)$ for various classes of dendrimers or general linear polymers, and
to introduce the ordering of trees and unicyclic graphs with respect to $\xi^c$\,. It would be also
interesting to determine extremal regular (cubic) graphs with respect to the eccentric connectivity
index. In the following, we present some derivative indices of the eccentric connectivity index and
research of those indices seems the most natural course of the future work.

The {\em eccentric distance sum} is defined in \cite{GuSiMa02} as
$$
\xi^{ds}(G) = \sum_{v \in V (G)} \varepsilon (v) \cdot D (v)\,,
$$
where $D (v) = \sum_{u \in V (G)} d (u, v)$ is the sum of all distances from the vertex $v$\,. This
index offers a vast potential for structure activity/property relationships.

The {\em adjacent eccentric distance sum index} is defined in \cite{SaMa03} as
$$
\xi^{sv}(G) = \sum_{v \in V (G)} \frac{\varepsilon (v) \cdot D (v)}{deg (v)}\,.
$$

The {\em augmented and super augmented eccentric connectivity indices}
\cite{BaSaGuMa06,BaSaMa05,DuMa09,DuGuMa08} are novel modifications of the eccentric connectivity
index with augmented discriminating power,
$$
\xi^{ac}(G) = \sum_{v \in V (G)} \frac{M (v)}{\varepsilon (v)} \qquad \mbox{and} \qquad
\xi^{sac}(G) = \sum_{v \in V (G)} \frac{M (v)}{\varepsilon^2 (v)}\,,
$$
where $M (v)$ is the product of degrees of all neighboring vertices of $v$\,. These indices were
found to exhibit high sensitivity towards the presence and relative position of heteroatoms.

\vspace{0.5cm}

{\bf Acknowledgement. } This work was supported by the Research grant 144007 of the Serbian
Ministry of Science and Technological Development. I would like to thank Professors Ivan Gutman and
Mircea Diudea for their continuous advices and encouragement. Also, I would like to thank Andreja
Ili\' c and Tomislav Do\v sli\' c for valuable discussions and providing some recent references.


\end{document}